\newcommand{\Xe}{{X_{\rm e}}}
\newcommand{\Ye}{{Y_{\rm e}}}
\newcommand{\Ze}{{Z_{\rm e}}}
\newcommand{\Xpe}{{X'_{\rm e}}}
\newcommand{\Ype}{{Y'_{\rm e}}}
\newcommand{\LB}{{\rm LB}}
\newcommand{\UB}{{\rm UB}}
\newcommand{\cN}{{\mathcal N}}
\newcommand{\cD}{{\mathcal D}}
\newcommand{\cT}{{\mathcal T}}
\newcommand{\bNmax}{{\mathbb{N}_{\rm max}}}
\newcommand{\bK}{{\mathbb{K}}}
\title{Reduced Basis {\em A Posteriori} Error Bounds for the Instationary Stokes Equations}
\author{Anna-Lena Gerner\thanks{Aachen Institute for Advanced Study in Computational Engineering Science (AICES), RWTH Aachen University, Schinkelstra\ss e 2, 52062 Aachen, Germany ({\tt gerner@aices.rwth-aachen.de})}
\and Karen Veroy\thanks{Aachen Institute for Advanced Study in Computational Engineering Science (AICES) and Faculty of Civil Engineering, RWTH Aachen University, Schinkelstra\ss e 2, 52062 Aachen, Germany ({\tt veroy@aices.rwth-aachen.de})}}
\begin{document}

\maketitle

\begin{abstract}
We present reduced basis approximations and rigorous {\em a posteriori} error bounds for the instationary Stokes equations.  
We shall discuss both a method based on the standard formulation as well as a method based on a penalty approach, which combine techniques developed in \cite{Gerner:2011fk,Gerner:2012fk} and \cite{veroy10:_stokes_penalty} with current reduced basis techniques for parabolic problems. The analysis then shows how time integration affects the development of reduced basis {\em a posteriori} error bounds as well as the construction of computationally efficient reduced basis approximation spaces. 
To demonstrate their performance in practice, the methods are applied to a Stokes flow in a two-dimensional microchannel with a parametrized rectangular obstacle; evolution in time is induced by a time-dependent velocity profile on the inflow boundary.
Numerical results illustrate (i)~the rapid convergence of reduced basis approximations, (ii)~the performance of {\em a posteriori} error bounds with respect to sharpness, and (iii)~computational efficiency. 
\end{abstract}

\begin{keywords} 
Instationary Stokes equations; incompressible fluid flow; saddle point problem; model order reduction; reduced basis method; {\em a posteriori} error bounds
\end{keywords}

\begin{AMS}
65N12, 65N15, 65N30, 76D07
\end{AMS}

\pagestyle{myheadings}
\thispagestyle{plain}
\markboth{\sc A.-L.~Gerner and K.~Veroy}{\sc Reduced Basis Methods for the Instationary Stokes Equations}

\section*{Introduction}

Designed for the real-time and many-query context of parameter estimation, optimization, and control, the reduced basis (RB) method permits the efficient yet reliable approximation of input-output relationships induced by parametrized partial differential equations. The essential ingredients are: (i) dimension reduction, through Galerkin projection onto a low-dimensional RB space; (ii) certainty, through rigorous {\em a posteriori} bounds for the errors in the RB approximations; (iii) computational efficiency, through an Offline-Online computational strategy; and (iv) effectiveness, through a greedy sampling approach.

In this paper, we demonstrate how RB techniques presented in \cite{Gerner:2011fk,Gerner:2012fk,veroy10:_stokes_penalty} for parametrized saddle point problems may be extended to the time-dependent setting. To this end, we consider the instationary Stokes equations. We shall discuss both a method based on the standard formulation as well as a method based on a penalty approach (see also \cite{Gerner:2012uq} for initial results), which combine techniques developed in \cite{Gerner:2011fk,Gerner:2012fk} and \cite{veroy10:_stokes_penalty} with current RB techniques for parabolic problems (see, e.g., \cite{Grepl:2005kx,grepl05,haasdonk08:_reduc}). The analysis then shows how time integration affects the development of RB {\em a posteriori} error bounds as well as the construction of computationally efficient RB approximation spaces. 

Starting from the standard mixed formulation of the instationary Stokes equations, we develop rigorous {\em a posteriori} error bounds for the RB velocity approximations. As in the stationary case presented in \cite{Gerner:2011fk,Gerner:2012fk}, they involve the (Online-) estimation of coercivity, continuity, and inf-sup stability constants associated with the diffusion term and incompressibility constraint; in addition, they now also depend on continuity constants associated with the mass term.
Employing a penalty formulation, we obtain rigorous upper bounds for the errors in both the velocity and pressure approximations. As in the stationary case presented in \cite{veroy10:_stokes_penalty}, they are computationally very efficient since they do not involve the estimation of inf-sup stability constants but depend only on coercivity constants associated with the diffusion and penalty terms; however, they again also depend on the penalty parameter such that associated effectivities increase as we approach the nonpenalized problem.
To construct efficient RB approximation spaces, we consider a POD greedy procedure (see \cite{Grepl:2005kx,Haasdonk:2011ffk,haasdonk08:_reduc}) that is coupled with adaptive stabilization techniques developed in \cite{Gerner:2011fk}.
To demonstrate their performance in practice, the methods are then applied to a Stokes flow in a parametrized domain where evolution in time is induced by a time-dependent velocity profile on the inflow boundary.

This paper is organized as follows: In \S \ref{s:general_problem_statement}, we introduce the general problem formulation and its ``truth'' approximation upon which our RB approximation will subsequently be built. We start from a time-discrete framework already that allows us to directly recover the settings discussed in \cite{Gerner:2011fk,Gerner:2012fk} and \cite{veroy10:_stokes_penalty}; now, we have a saddle point problem associated with each time step. The time discretization scheme is given by a backward Euler method. Section \ref{s:RB_method} then describes our RB method. In \S \ref{ss:Galerkin_projection}, we define the RB approximation as the Galerkin projection onto a low-dimensional RB approximation space. We develop rigorous {\em a posteriori} error bounds in \S \ref{ss:error_estimation}. Both RB approximations and error bounds can be computed Online-efficiently as summarized in \S \ref{ss:offline_online}. This enables us to employ adaptive sampling processes for constructing computationally efficient RB approximation spaces, which shall be outlined in \S \ref{ss:construction_RB_spaces}. In \S \ref{s:model_problem}, we introduce our instationary Stokes model problem. Numerical results in \S \ref{s:numerical_results} then illustrate (i)~the rapid convergence of RB approximations, (ii)~the performance of {\em a posteriori} error bounds with respect to sharpness, and (iii)~computational efficiency. Finally, in \S \ref{s:conclusion}, we give some concluding remarks.

\section{General Problem Statement} \label{s:general_problem_statement}

\subsection{Formulation} \label{ss:pspp}

Let $\Xe$ and $\Ye$ be two Hilbert spaces with inner products $(\cdot,\cdot)_{\Xe}$, $(\cdot,\cdot)_{\Ye}$ and associated norms $\|\cdot\|_{\Xe} = \sqrt{(\cdot,\cdot)_{\Xe}}$, $\|\cdot\|_{\Ye}=\sqrt{(\cdot,\cdot)_{\Ye}}$, respectively.\footnote{Here and in the following, the subscript e denotes ``exact''.} We define the product space $\Ze \equiv \Xe \times \Ye$, with inner product $(\cdot,\cdot)_{\Ze} \equiv (\cdot,\cdot)_{\Xe} + (\cdot,\cdot)_{\Ye}$ and norm $\|\cdot\|_{\Ze} = \sqrt{(\cdot,\cdot)_{\Ze}}$. The associated dual spaces are denoted by $\Xpe$, $\Ype$, and $Z'_{\rm e}$.

Furthermore, let $\cD \subset \mathbb{R}^{n}$ be a prescribed $n$-dimensional, compact parameter set.  For any parameter $\mu \in \cD$, we then consider the continuous bilinear forms $m(\cdot,\cdot;\mu):\Xe\times \Xe \to\mathbb{R}$, 
$a(\cdot,\cdot;\mu):\Xe\times \Xe\rightarrow \mathbb{R}$, and $b(\cdot,\cdot;\mu):\Xe\times \Ye \rightarrow \mathbb{R}$,\footnote{For clarity of exposition, we suppress the obvious requirement of nonzero elements in the denominators.} 
\begin{align}  \label{eq:gamma_m_e}
\gamma_m^{\rm e}(\mu) & \equiv \sup_{u\in \Xe} \sup_{v\in \Xe}\frac{m(u,v;\mu)}{\|u\|_{\Xe}\|v\|_{\Xe}} < \infty \quad\forall\;\mu\in\cD,\\ 
\label{eq:gamma_a_e}
\gamma_a^{\rm e}(\mu) & \equiv \sup_{u\in \Xe} \sup_{v\in \Xe}\frac{a(u,v;\mu)}{\|u\|_{\Xe}\|v\|_{\Xe}} < \infty \quad\forall\;\mu\in\cD,\\
\label{eq:gamma_b_e}
\gamma_b^{\rm e}(\mu) &\equiv \sup_{q\in \Ye} \sup_{v\in \Xe} \frac{b(v,q;\mu)}{\|q\|_{\Ye}\|v\|_{\Xe}} < \infty \quad\forall\;\mu\in\cD,
\end{align}
as well as $c(\cdot,\cdot;\mu):\Ye\times\Ye\to\mathbb{R}$,
\begin{align}
\label{eq:gamma_c_e}
\gamma_c^{\rm e}(\mu) & \equiv \sup_{p\in \Ye} \sup_{q\in \Ye}\frac{c(p,q;\mu)}{\|p\|_{\Ye}\|q\|_{\Ye}} < \infty \quad\forall\;\mu\in\cD.
\end{align}
We moreover assume that $a(\cdot,\cdot;\mu)$ and $c(\cdot,\cdot;\mu)$ are coercive on $\Xe$ and $\Ye$, respectively,
\begin{align} \label{eq:alpha_a_e}
\alpha_a^{\rm e}(\mu) &\equiv \inf_{v\in \Xe} \frac{a(v,v;\mu)}{\|v\|^2_{\Xe}} > 0  \quad\forall\;\mu\in\cD,\\
 \label{eq:alpha_c_e}
\alpha_c^{\rm e}(\mu) &\equiv \inf_{q\in \Ye} \frac{c(q,q;\mu)}{\|q\|^2_{\Ye}} > 0 \quad\forall\;\mu\in\cD,
\end{align}
$m(\cdot,\cdot;\mu)$ is symmetric and positive definite,
\begin{align}\label{eq:pos_def_m_e}
m(v,v;\mu) &> 0 \quad\forall\; 0\neq v\in\Xe \quad\forall\;\mu\in\cD,
\end{align}
and $b(\cdot,\cdot;\mu)$ satisfies the inf-sup condition
\begin{equation} \label{eq:brezzi_infsup_e}
\beta^{\rm e}(\mu) \equiv \inf_{q\in \Ye} \sup_{v\in \Xe} \frac{b(v,q;\mu)}{\|q\|_{\Ye}\|v\|_{\Xe}}>0\quad\forall\; \mu\in\cD.
\end{equation}
By (\ref{eq:gamma_a_e}), (\ref{eq:alpha_a_e}) and (\ref{eq:gamma_c_e}), (\ref{eq:alpha_c_e}), the bilinear forms $a(\cdot,\cdot;\mu)$ and $c(\cdot,\cdot;\mu)$ provide with $\|\cdot\|_{\Xe,\mu}\equiv\sqrt{a(\cdot,\cdot;\mu)}$ and $\|\cdot\|_{\Ye,\mu}\equiv\sqrt{c(\cdot,\cdot;\mu)}$ energy norms on $\Xe$ and $\Ye$, respectively, which are equivalent to $\|\cdot\|_\Xe$ and $\|\cdot\|_\Ye$ for any $\mu\in\cD$; note that, to this end, $a(\cdot,\cdot;\mu)$ and $c(\cdot,\cdot;\mu)$ do not necessarily have to be symmetric.
Furthermore, as a symmetric and positive definite bilinear form, $m(\cdot,\cdot;\mu)$ defines an inner product on $\Xe$ for any parameter $\mu\in\cD$; the associated norm shall be denoted by $\|\cdot\|_\mu \equiv \sqrt{m(\cdot,\cdot;\mu)}$. 

We further assume that we are given a time interval $[0,T]$, $T>0$, and linear functionals $f(\cdot;\mu)\in C^0(0,T;\Xpe)$ and $g(\cdot;\mu)\in C^0(0,T;\Ype)$ for all $\mu\in\cD$; for a vector space $V$, $C^0(0,T;V)$ here denotes the space of $V$-valued functions of class $C^0$ with respect to $t\in [0,T]$.
Throughout this work, we directly consider a time-discrete framework: We divide the time interval $[0,T]$ into $K$ subintervals of equal length $\Delta t \equiv T/K$, and define $t^k\equiv k\Delta t$ for all $k=0,\ldots, K$; for notational convenience, we also introduce $\bK \equiv \{1,\ldots, K\}$ and $\bK_0 \equiv \bK \cup \{0\}$. We then set
$f^k(\cdot;\mu) \equiv f(t^k;\mu)\in \Xpe$ and $g^k(\cdot;\mu) \equiv g(t^k;\mu)\in \Ype$ for all $k\in\bK_0$, $\mu\in\cD$.

For $\varepsilon\geq 0$, we now consider the following ``exact''---more precisely, semi-discrete---problem resulting from a backward Euler method (see, e.g., \cite{Ern:2004jl,Reusken:2011fk,Quarteroni:2008fk,Thomee:1997fk}): For any given parameter $\mu\in\cD$, we find $u^{\varepsilon,k}_{\rm e}(\mu)\in \Xe$ and $p^{\varepsilon,k}_{\rm e}(\mu)\in \Ye$, $k\in\bK$, such that $u^{\varepsilon,0}_{\rm e}(\mu)=0$\footnote{We here assume zero initial conditions for simplicity; note that nonzero initial conditions can be handled as well without much difficulty (see \cite{grepl05}).} and
\begin{align}\nonumber
&{\textstyle \frac{1}{\Delta t}}\, m(u^{\varepsilon,k}_{\rm e}(\mu)-u^{\varepsilon,k-1}_{\rm e}(\mu),v;\mu) \\ \label{eq:exact_scheme}
&\qquad\begin{split}
 + \; a(u^{\varepsilon,k}_{\rm e}(\mu),v;\mu) + b(v,p^{\varepsilon,k}_{\rm e}(\mu);\mu) &= f^k(v;\mu) \quad\forall\;v\in \Xe,\\
 b(u^{\varepsilon,k}_{\rm e}(\mu),q;\mu) -\varepsilon\,c(p^{\varepsilon,k}_{\rm e}(\mu),q;\mu)&= g^k(q;\mu) \hspace{0.3ex}\quad\forall\;q\in \Ye,
 \end{split}
 \quad k\in\bK.
\end{align}
Even though we here use a common notation for simplicity in exposition, we point out that (\ref{eq:exact_scheme}) states very different problems for $\varepsilon=0$ and $\varepsilon>0$, respectively. For $\varepsilon=0$, we also denote $u^k_{\rm e}(\mu)\equiv u^{0,k}_{\rm e}(\mu)$, $k\in\bK_0$, and $p^k_{\rm e}(\mu)\equiv p^{0,k}_{\rm e}(\mu)$, $k\in\bK$, for all $\mu\in\cD$. For $\varepsilon>0$, corresponding to our discussions in \cite{veroy10:_stokes_penalty}, (\ref{eq:exact_scheme}) can be considered as a perturbed or regularized version of the problem associated with $\varepsilon=0$; in this case, we therefore call $(u^{\varepsilon,k}_{\rm e}(\mu), p^{\varepsilon,k}_{\rm e}(\mu)), k\in\bK$, also the penalty solution.
Since these problems differ considerably in their general nature (cf.~\cite{Gerner:2011fk} and \cite{veroy10:_stokes_penalty}), we shall often treat them separately in the following analysis and explicitly distinguish between the two cases $\varepsilon=0$ and $\varepsilon>0$.
From (\ref{eq:alpha_a_e}) and (\ref{eq:pos_def_m_e}), the bilinear form $\frac{1}{\Delta t}m(\cdot,\cdot;\mu) + a(\cdot,\cdot;\mu)$ is coercive on $\Xe$ for any $\mu\in\cD$. The problem (\ref{eq:exact_scheme}) is thus uniquely solvable for $(u^{k}_{\rm e}(\mu), p^{k}_{\rm e}(\mu)), k\in\bK$, and $(u^{\varepsilon,k}_{\rm e}(\mu), p^{\varepsilon,k}_{\rm e}(\mu)), k\in\bK$, as a saddle point problem according to \cite{Gerner:2011fk} and \cite{veroy10:_stokes_penalty}, respectively.

\subsection{Truth Approximation} \label{ss:truth}
We now introduce a high-fidelity ``truth'' approximation upon which our RB approximation will subsequently be built.  To this end, let $X$ and $Y$ denote finite-dimensional subspaces of $\Xe$ and $\Ye$, respectively.  We define the product space $Z \equiv X \times Y$ and denote by $\cN$ the dimension of $Z$. We emphasize that the dimension $\cN$ is typically very large. These ``truth'' approximation subspaces inherit the inner products and norms of the exact spaces: $(\cdot,\cdot)_X \equiv (\cdot,\cdot)_{\Xe}$, $\|\cdot\|_X\equiv \|\cdot\|_{\Xe}$, $(\cdot,\cdot)_Y \equiv (\cdot,\cdot)_{\Ye}$, $\|\cdot\|_Y\equiv \|\cdot\|_{\Ye}$, and $(\cdot,\cdot)_Z \equiv (\cdot,\cdot)_{\Ze}$, $\|\cdot\|_Z\equiv \|\cdot\|_{\Ze}$.

Clearly, the continuity properties (\ref{eq:gamma_m_e}), (\ref{eq:gamma_a_e}), (\ref{eq:gamma_b_e}), and (\ref{eq:gamma_c_e}) are passed on to the ``truth'' approximation spaces,
\begin{align} \label{eq:gamma_m}
\gamma_m(\mu) &\equiv \sup_{u\in X} \sup_{v\in X} \frac{m(u,v;\mu)}{\|u\|_X \|v\|_X} < \infty \quad\forall\;\mu\in\cD,\\ 
\label{eq:gamma_a}
\gamma_a(\mu) &\equiv \sup_{u\in X} \sup_{v\in X} \frac{a(u,v;\mu)}{\|u\|_X \|v\|_X} < \infty \quad\forall\;\mu\in\cD,\\ \label{eq:gamma_b}
\gamma_b(\mu) &\equiv \sup_{q\in Y} \sup_{v\in X} \frac{b(v,q;\mu)}{\|q\|_Y \|v\|_X} < \infty\quad\forall\;\mu\in\cD,\\
\label{eq:gamma_c}
\gamma_c(\mu) &\equiv \sup_{p\in X} \sup_{q\in X} \frac{c(p,q;\mu)}{\|p\|_Y \|q\|_Y} < \infty\quad\forall\;\mu\in\cD;
\end{align}
so are the coercivity properties (\ref{eq:alpha_a_e}) and (\ref{eq:alpha_c_e}),
\begin{align}
 \label{eq:alpha_a}
\alpha_a(\mu) &\equiv \inf_{v\in X} \frac{a(v,v;\mu)}{\|v\|^2_X} > 0 \quad\forall\;\mu\in\cD,\\
 \label{eq:alpha_c}
\alpha_c(\mu) &\equiv \inf_{q\in X} \frac{c(q,q;\mu)}{\|q\|^2_Y} > 0 \quad\forall\;\mu\in\cD,
\end{align}
as well as the inner product $m(\cdot,\cdot;\mu)$ and associated norm $\|\cdot\|_\mu$,
\begin{equation} \label{eq:pos_def_m}
m(v,v;\mu) > 0 \quad\forall\; 0\neq v\in X \quad\forall\;\mu\in\cD.
\end{equation}
Thus, $\|\cdot\|_{X,\mu}\equiv\|\cdot\|_{\Xe,\mu}$ and $\|\cdot\|_{Y,\mu}\equiv\|\cdot\|_{\Ye,\mu}$ define norms on $X$ and $Y$, respectively, which are equivalent to $\|\cdot\|_X$ and $\|\cdot\|_Y$ for any $\mu\in\cD$. We further assume that the approximation spaces $X$ and $Y$ are chosen such that they satisfy the Ladyzhenskaya--Babu\v{s}ka--Brezzi (LBB) inf-sup condition (see, e.g., \cite{Brezzi:1991fk})
\begin{equation} \label{eq:LBB}
\beta(\mu) \equiv \inf_{q\in Y} \sup_{v\in X} \frac{b(v,q;\mu)}{\|q\|_{Y} \|v\|_X} \geq \beta^0(\mu) > 0 \quad \forall\; \mu\in\cD,
\end{equation}
where $\beta^0(\mu)$ is a constant independent of the dimension ${\mathcal{N}}$.

Our high-fidelity ``truth'' discretization for (\ref{eq:exact_scheme}) now reads as follows: For $\varepsilon\geq 0$ and any given $\mu\in\cD$, we find $u^{\varepsilon,k}(\mu)\in X$ and $p^{\varepsilon,k}(\mu)\in Y$, $k\in\bK$, such that $u^{\varepsilon,0}(\mu)=0$ and
\begin{align}\nonumber
&{\textstyle \frac{1}{\Delta t}}\, m(u^{\varepsilon,k}(\mu)-u^{\varepsilon,k-1}(\mu),v;\mu) \\ \label{eq:truth_scheme}
&\qquad\begin{split}
 + \; a(u^{\varepsilon,k}(\mu),v;\mu) + b(v,p^{\varepsilon,k}(\mu);\mu) &= f^k(v;\mu) \quad\forall\;v\in X,\\ 
 \hspace{2ex} b(u^{\varepsilon,k}(\mu),q;\mu) -\varepsilon\,c(p^{\varepsilon,k}(\mu),q;\mu) &= g^k(q;\mu) \hspace{0.2ex}\quad\forall\;q\in Y,
\end{split}
\quad k\in\bK.
\end{align}
In case of $\varepsilon=0$, we also denote $u^k(\mu)\equiv u^{0,k}(\mu)$, $k\in\bK_0$, and $p^k(\mu)\equiv p^{0,k}(\mu)$, $k\in\bK$.
As the exact problem in \S \ref{ss:pspp}, the problem (\ref{eq:truth_scheme}) is uniquely solvable for $(u^{k}(\mu),p^{k}(\mu)), k\in\bK$, and $(u^{\varepsilon,k}(\mu), p^{\varepsilon,k}(\mu)), k\in\bK$, according to \cite{Gerner:2011fk} and \cite{veroy10:_stokes_penalty}, respectively.

\begin{remark} \label{rmrk:truth_LBB} 
We note that in case of $\varepsilon>0$, the LBB inf-sup condition (\ref{eq:LBB}) is in fact not a compulsory requirement for the system (\ref{eq:truth_scheme}) to be well-posed (see, e.g., \cite{Brezzi:1991fk}). However, if the problem is considered as a perturbation of the problem associated with $\varepsilon=0$, the condition is needed for the solution $(u^{\varepsilon,k}(\mu), p^{\varepsilon,k}(\mu)), k\in\bK$, to converge to $(u^{k}(\mu),p^{k}(\mu)), k\in\bK$, as $\varepsilon$ tends to zero (see, e.g., \cite{Bercovier:1978kx}). For further details in this context, we refer the reader also to \cite[\S 4]{Gerner:2012ag}. 
\end{remark}

\section{The Reduced Basis Method} \label{s:RB_method}

We now turn to the RB method, discussing the approximation procedure, rigorous {\em a posteriori} error estimators, and the construction of stable approximation spaces that capture the causality associated with the parameter dependence {\em as well as} with evolution in time.

\subsection{Galerkin Projection}\label{ss:Galerkin_projection}

Suppose that we are given a set of nested, low-dimensional RB approximation subspaces $X_N\subset X_{N+1}\subset X$ and $Y_N\subset Y_{N+1}\subset Y$, $N\in \bNmax \equiv\{1,\ldots,N_{\rm max}\}$. We denote by $N_X$ and $N_Y$ the dimensions of $X_N$ and $Y_N$, respectively, and the total dimension of $Z_N\equiv X_N\times Y_N$ by $N_Z \equiv N_X + N_Y$. The subspaces $X_N$, $Y_N$, and $Z_N$ again inherit all inner products and norms of $X$, $Y$, and $Z$, respectively. The RB approximation is then defined as the Galerkin projection with respect to the truth problem (\ref{eq:truth_scheme}) onto these low-dimensional subspaces: 
For $\varepsilon\geq 0$ and any given $\mu\in \cD$, we find $u^{\varepsilon,k}_N(\mu)\in X_N$ and $p^{\varepsilon,k}_N(\mu)\in Y_N$, $k\in\bK$, such that $u^{\varepsilon,0}_N(\mu)=0$ and
\begin{align}\nonumber
&{\textstyle\frac{1}{\Delta t}}\, m(u^{\varepsilon,k}_N(\mu)-u^{\varepsilon,k-1}_N(\mu),v_N;\mu)\\ \label{eq:rb_scheme}
&\begin{split}
 + \; a(u_{N}^{\varepsilon,k}(\mu),v_N;\mu) + b(v_N,p^{\varepsilon,k}_{N}(\mu);\mu) &= f^k(v_N;\mu)\quad\forall\;v_N\in X_N,\\
b(u^{\varepsilon,k}_{N}(\mu),q_N;\mu) - \varepsilon\, c(p^{\varepsilon,k}_{N}(\mu),q_N;\mu) & = g^k(q_N;\mu)\hspace{0.3ex}\quad\forall\;q_N\in Y_N,
\end{split}
\quad k\in\bK.
\end{align}
Again, we denote $u^k_N(\mu)\equiv u^{0,k}_N(\mu)$, $k\in\bK_0$, and $p^k_N(\mu)\equiv p^{0,k}_N(\mu)$, $k\in\bK$.

The discrete RB system now essentially behaves as in the stationary case: We recall (see \cite{Gerner:2011fk}) that a pair $(X_N,Y_N)$ of RB approximation spaces is called {\em stable} if it satisfies the inf-sup condition
\begin{equation}\label{eq:rb_infsup}
\beta_N(\mu)\equiv \inf_{q_N\in Y_N} \sup_{v_N \in X_N} \frac{b(v_N,q_N;\mu)}{\|q_N\|_Y\|v_N\|_X}>0 \quad\forall\;\mu\in\cD.
\end{equation}
In case of $\varepsilon=0$, (\ref{eq:rb_scheme}) is then uniquely solvable for $(u^k_N(\mu),p^k_{N}(\mu)), k\in\bK$, if and only if the RB approximation spaces $X_N, Y_N$ are stable; in case of $\varepsilon>0$, corresponding to our comments on the truth problem in Remark~1.1, (\ref{eq:rb_scheme}) is uniquely solvable for $(u^{\varepsilon,k}_N(\mu),p^{\varepsilon,k}_N(\mu)), k\in\bK$, for any choice of $X_N, Y_N$ (see \cite{Brezzi:1991fk,Gerner:2012ag}). 

\subsection{{\em A Posteriori} Error Estimation} \label{ss:error_estimation}

We now develop upper bounds for the errors in our RB approximations that are rigorous, sharp, and computationally efficient. In this context, symmetric problems shall be discussed as a special case in which these bounds can be further sharpened.

In this section, we assume that the low-dimensional RB spaces $X_N, Y_N$ are constructed such that for any given $\mu\in\cD$, a solution $(u^{\varepsilon,k}_N(\mu),p^{\varepsilon,k}_N(\mu))\in X_N\times Y_N$, $k\in\bK$, to (\ref{eq:rb_scheme}) exists (see \S \ref{ss:Galerkin_projection}). 
For $\mu\in\cD$, we then consider the errors
\begin{align} \nonumber
e^u_N(\mu) &\equiv (e^{u,k}_N(\mu))_{k\in\bK}, \text{ where } e^{u,k}_N(\mu) \equiv u^{\varepsilon,k}(\mu) - u^{\varepsilon,k}_N(\mu) \in X, \;k\in\bK,\\ \label{eq:def_errors}
e^p_N(\mu) &\equiv (e^{p,k}_N(\mu))_{k\in\bK}, \text{ where } e^{p,k}_N(\mu) \equiv p^{\varepsilon,k}(\mu) -p^{\varepsilon,k}_N(\mu) \in Y,\; k\in\bK,\\ \nonumber
e^\varepsilon_N(\mu) &\equiv (e^{\varepsilon,k}_N(\mu))_{k\in\bK}, \text{ where } e^{\varepsilon,k}_N(\mu) \equiv (e^{u,k}_N(\mu),e^{p,k}_N(\mu))\in Z, \;k\in\bK,
\end{align}
in the RB approximations $(u^{\varepsilon,k}_N(\mu),p^{\varepsilon,k}_N(\mu)), k\in\bK$, with respect to the truth solution $(u^{\varepsilon,k}(\mu),p^{\varepsilon,k}(\mu)), k\in\bK$; we note that in particular $e^{u,0}_N(\mu)\equiv u^{\varepsilon,0}(\mu) - u^{\varepsilon,0}_N(\mu)=0$ from our initial conditions. 

To formulate our RB {\em a posteriori} error bounds, we rely on the residuals associated with the RB approximation $(u^{\varepsilon,k}_N(\mu),p^{\varepsilon,k}_N(\mu)), k\in\bK$,
\begin{align} \nonumber
r^{1,k}_{N}(\cdot;\mu) &\equiv f^k(\mu) - {\textstyle \frac{1}{\Delta t}}\,m(u^{\varepsilon,k}_{N}(\mu)-u^{\varepsilon,k-1}_{N}(\mu),v;\mu)\\ \label{eq:residual_1_k}
&\hspace{26ex} - a(u^{\varepsilon,k}_{N}(\mu),v;\mu) - b(v,p^{\varepsilon,k}_{N}(\mu);\mu) \in X',\\ \label{eq:residual_2_k}
r^{2,k}_{N}(\cdot;\mu) &\equiv g^k(\mu) - b(u^{\varepsilon,k}_{N}(\mu),q;\mu)+ \varepsilon\, c(p^{\varepsilon,k}_{N}(\mu),q;\mu) \in Y'
\end{align}
for $k\in\bK$ and $\mu\in\cD$.

In the following analysis, we distinguish between the cases $\varepsilon=0$ and $\varepsilon>0$.

\subsubsection{$\varepsilon=0$} \label{sss:errbnds}

We here derive rigorous upper bounds for the error $e^{u}_N(\mu)$ measured in the ``spatio-temporal'' energy norm
\begin{equation} \label{eq:energy_norm}
\|(v^j)_{j\in\bK}\|_{\ell^2(0,k;X)} \equiv \Bigg(\|v^k\|^2_\mu + \Delta t \sum_{j=1}^k \|v^j\|^2_{X,\mu}\Bigg)^{1/2},\quad (v^j)_{j\in\bK}\subseteq X, \quad k\in\bK.
\end{equation}
Our RB {\em a posteriori} error bounds shall be formulated in terms of the dual norms of the residuals (\ref{eq:residual_1_k}) and (\ref{eq:residual_2_k}), and (Online-)efficient lower and upper bounds to the truth continuity, coercivity, and inf-sup constants (\ref{eq:gamma_m}), (\ref{eq:gamma_a}), (\ref{eq:alpha_a}), and (\ref{eq:LBB}),
\begin{align} \label{eq:constants_LB_UB}
\setlength\arraycolsep{2pt}
\begin{array}{ccccc}
\gamma_m^\LB(\mu) & \leq & \gamma_m(\mu) & \leq & \gamma_m^\UB(\mu),\\[0.5ex]
\gamma_a^\LB(\mu) & \leq & \gamma_a(\mu) & \leq & \gamma_a^\UB(\mu),\\[0.5ex]
\alpha_a^\LB(\mu) & \leq &  \alpha_a(\mu) & \leq & \alpha_a^\UB(\mu),\\[0.5ex]
\beta^\LB(\mu) & \leq &  \beta(\mu) & \leq & \beta^\UB(\mu),
\end{array}
\quad\forall\;\mu\in\cD.
\end{align}

We can now state the following result.
\begin{proposition} \label{prpstn:errbnd}
For any given $\mu\in\cD$, $N\in\bNmax$, $k\in\bK$, and $\alpha_a^\LB(\mu)$, $\gamma^\UB_a(\mu)$, $\beta^\LB(\mu)$, $\gamma_m^\UB(\mu)$ satisfying (\ref{eq:constants_LB_UB}), we define 
\begin{multline} \label{eq:energy_errbnd}
\Delta^{k}_N(\mu) \equiv \Bigg[ \Delta t \sum_{j=1}^k \frac{\|r^{1,j}_N(\cdot;\mu)\|^2_{X'}}{{\alpha^\LB_a(\mu)}} + \frac{2}{\beta^\LB(\mu)}\Bigg(1+{\frac{\gamma^\UB_a(\mu)}{\alpha^\LB_a(\mu)}}\Bigg) \|r^{1,j}_N(\cdot;\mu)\|_{X'}\|r^{2,j}_N(\cdot;\mu)\|_{Y'}\\
 + \Bigg(\frac{\gamma^\UB_m(\mu)}{\Delta t} + \frac{(\gamma^\UB_a(\mu))^2}{\alpha^\LB_a(\mu)}\Bigg) \frac{\|r^{2,j}_N(\cdot;\mu)\|^2_{Y'}}{(\beta^\LB(\mu))^2}\Bigg]^{1/2}.
\end{multline}
Then, $\Delta^{k}_N(\mu)$ represents an upper bound for the error $e^{u}_N(\mu)$ measured in the ``spatio-temporal'' energy norm (\ref{eq:energy_norm}),
\begin{equation} \label{eq:rig_energy_errbnd}
\|e^{u}_N(\mu)\|_{\ell^2(0,k;X)} \leq \Delta^{k}_N(\mu) \quad\forall\; k\in\bK, \;\mu\in\cD, \;N\in\bNmax.
\end{equation}
\end{proposition}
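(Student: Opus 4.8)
The plan is to obtain, for each time step $k\in\bK$, an error equation by subtracting the RB scheme (\ref{eq:rb_scheme}) from the truth scheme (\ref{eq:truth_scheme}), which yields for all $v\in X$, $q\in Y$
\[
{\textstyle\frac{1}{\Delta t}}\,m(e^{u,k}_N(\mu)-e^{u,k-1}_N(\mu),v;\mu) + a(e^{u,k}_N(\mu),v;\mu) + b(v,e^{p,k}_N(\mu);\mu) = r^{1,k}_N(v;\mu),
\]
together with $b(e^{u,k}_N(\mu),q;\mu) = r^{2,k}_N(q;\mu)$. First I would control the pressure-error contribution: using the LBB condition (\ref{eq:LBB}) on the first error equation after moving the $b(\cdot,e^{p,k}_N)$ term to the right, one bounds $\|e^{p,k}_N(\mu)\|_Y$ in terms of $\|r^{1,k}_N\|_{X'}$ and the quantities $\|e^{u,k}_N-e^{u,k-1}_N\|_\mu$, $\|e^{u,k}_N\|_{X,\mu}$ (via $\gamma_m,\gamma_a$). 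Separately, one splits the velocity error into a ``divergence'' part and a ``divergence-free'' part via the inf-sup condition: there exists $w^k\in X$ with $b(w^k,q;\mu)=r^{2,k}_N(q;\mu)$ for all $q\in Y$ and $\|w^k\|_X\le \|r^{2,k}_N\|_{Y'}/\beta(\mu)$, so that $z^k\equiv e^{u,k}_N-w^k$ satisfies $b(z^k,q;\mu)=0$.

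Next I would take $v=z^k$ in the velocity error equation, using $b(z^k,e^{p,k}_N;\mu)=0$ to kill the pressure term, and rewrite the mass term using the algebraic identity $m(x^k-x^{k-1},x^k;\mu)=\frac12(\|x^k\|_\mu^2-\|x^{k-1}\|_\mu^2+\|x^k-x^{k-1}\|_\mu^2)$ applied with $x^k=e^{u,k}_N$ (after splitting off the $w^k$ contributions, which get absorbed into the residual-type right-hand side). Using coercivity $\alpha_a$ for the $a(z^k,z^k)$ term and continuity bounds for the cross terms $a(w^k,z^k)$, $\frac1{\Delta t}m(w^k-w^{k-1},z^k)$, and $r^{1,k}_N(z^k)$, followed by Young's inequality to absorb all $\|z^k\|_{X,\mu}^2$ terms into the coercive term, gives a per-step estimate of the form
\[
\|e^{u,k}_N(\mu)\|_\mu^2 - \|e^{u,k-1}_N(\mu)\|_\mu^2 + \Delta t\,\|z^k\|_{X,\mu}^2 \;\le\; \Delta t\,(\text{bracketed residual terms at step }k).
\]
Then I would sum over $j=1,\dots,k$, telescoping the mass terms and using $e^{u,0}_N=0$, and finally reinstate $\|e^{u,k}_N\|_{X,\mu}\le \|z^k\|_{X,\mu}+\|w^k\|_{X,\mu}$ to convert the $\sum\|z^j\|_{X,\mu}^2$ bound into the desired $\sum\|e^{u,j}_N\|_{X,\mu}^2$ bound that appears in the energy norm (\ref{eq:energy_norm}); replacing the exact constants by the computable bounds in (\ref{eq:constants_LB_UB}) and matching the $1/\Delta t$, $\gamma_a^2/\alpha_a$ groupings produces exactly (\ref{eq:energy_errbnd}).

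The main obstacle I anticipate is the bookkeeping in the Young's-inequality step: the cross terms $a(w^k,z^k;\mu)$ and especially the mass cross term $\frac1{\Delta t}m(w^k-w^{k-1},z^k;\mu)$ must be split with carefully chosen weights so that (i) the $\|z^k\|_{X,\mu}^2$ remainders sum to strictly less than the $\Delta t\sum\|z^k\|_{X,\mu}^2$ on the left (leaving room to recover the full energy norm after the final triangle inequality), and (ii) the resulting coefficients on $\|r^{1,j}_N\|_{X'}\|r^{2,j}_N\|_{Y'}$ and $\|r^{2,j}_N\|_{Y'}^2$ collapse precisely to the factors $\frac{2}{\beta^\LB}(1+\gamma_a^\UB/\alpha_a^\LB)$ and $\frac{1}{(\beta^\LB)^2}(\gamma_m^\UB/\Delta t + (\gamma_a^\UB)^2/\alpha_a^\LB)$ shown in (\ref{eq:energy_errbnd}); the appearance of $\gamma_m/\Delta t$ (rather than $\gamma_m$) is the fingerprint that the time-difference $w^k-w^{k-1}$ is itself bounded by $\|r^{2,k}_N\|_{Y'}+\|r^{2,k-1}_N\|_{Y'}$ over $\beta$, which then recombines after summation. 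A secondary subtlety is handling the index shift so that $\|r^{2,0}_N\|$ does not appear — this works because $e^{u,0}_N=0$ forces $w^0=0$, or alternatively one uses a slightly cruder but cleaner split that avoids differencing $w$ at the cost of the same final constants.
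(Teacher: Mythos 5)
Your overall skeleton (error equations, use of the inf-sup constant, an energy-type test, Young's inequality, the mass-term identity, summation with $e^{u,0}_N=0$) is in the same family as the paper's argument, but the mechanism you choose for the incompressibility residual is different: the paper never introduces a lifting $w^k$. It tests the two error equations with $(e^{u,j}_N,e^{p,j}_N)$ and subtracts, so the pressure enters only through $r^{2,j}_N(e^{p,j}_N)$, and then bounds $\|e^{p,j}_N\|_Y$ directly from the momentum error equation via (\ref{eq:LBB}), namely $\beta\,\|e^{p,j}_N\|_Y \le \|r^{1,j}_N\|_{X'} + \gamma_a\|e^{u,j}_N\|_X + \frac{\sqrt{\gamma_m}}{\Delta t}\|e^{u,j}_N-e^{u,j-1}_N\|_\mu$. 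The $\gamma_m^\UB/\Delta t$ term in (\ref{eq:energy_errbnd}) is the fingerprint of that last contribution: after Young's inequality, the piece $\frac{1}{2\Delta t}\|e^{u,j}_N-e^{u,j-1}_N\|^2_\mu$ is absorbed by the surplus in $m(e^{u,j}_N-e^{u,j-1}_N,e^{u,j}_N)=\frac12\big(\|e^{u,j}_N\|^2_\mu-\|e^{u,j-1}_N\|^2_\mu+\|e^{u,j}_N-e^{u,j-1}_N\|^2_\mu\big)$; no differencing of $r^{2}_N$ or of any lifting occurs, so your ``secondary subtlety'' about $r^{2,0}_N$ never arises in the paper.

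As written, your route has genuine gaps. First, the bookkeeping is internally inconsistent: if you split the trial error in the mass term (which is the only way the cross term $\frac1{\Delta t}m(w^k-w^{k-1},z^k)$ you list can appear), then the telescoping identity applies with $x^k=z^k$, not $x^k=e^{u,k}_N$, so your per-step display mixing $\|e^{u,k}_N\|^2_\mu-\|e^{u,k-1}_N\|^2_\mu$ with $\Delta t\,\|z^k\|^2_{X,\mu}$ does not follow from the steps you describe. Second, that cross term is the real obstruction: pairing it as $\frac1{\Delta t}\|w^k-w^{k-1}\|_\mu\|z^k\|_\mu$ leaves a $\|z^k\|^2_\mu/\Delta t$ remainder that cannot be absorbed per step (only the final time survives the telescope), so you would need a discrete Gronwall argument (introducing growth factors absent from (\ref{eq:energy_errbnd})) or a summation-by-parts step you do not describe; bounding $\|z^k\|_\mu\le\sqrt{\gamma_m}\|z^k\|_X$ instead produces coefficients of order $\gamma_m^2/(\beta^2\alpha_a\Delta t^2)$, not $\gamma_m/\Delta t$. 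Third, converting $\sum_j\|z^j\|^2_{X,\mu}$ (and the final $\|z^k\|^2_\mu$) back to the error through $\|e^{u,j}_N\|_{X,\mu}\le\|z^j\|_{X,\mu}+\|w^j\|_{X,\mu}$ must be squared, which introduces extra factors and extra $\|r^{2,j}_N\|^2_{Y'}$ terms, so it cannot ``produce exactly'' (\ref{eq:energy_errbnd}) as you claim. A minimal repair that keeps your $w^j$ and recovers the paper's constants verbatim: test with $z^j=e^{u,j}_N-w^j$ but do \emph{not} split the trial error, i.e.\ keep $m(e^{u,j}_N-e^{u,j-1}_N,e^{u,j}_N)$ and $a(e^{u,j}_N,e^{u,j}_N)$ on the left and move $r^{1,j}_N(w^j)$, $a(e^{u,j}_N,w^j)$, and $\frac1{\Delta t}m(e^{u,j}_N-e^{u,j-1}_N,w^j)$ to the right; with $\|w^j\|_X\le\|r^{2,j}_N\|_{Y'}/\beta$ this reproduces the paper's intermediate inequality term by term, after which the paper's Young/telescoping/summation steps, together with (\ref{eq:constants_LB_UB}), finish the proof. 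Alternatively, drop $w^j$ altogether and use the paper's direct pressure bound.
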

\begin{proof}
Let $\mu$ be any parameter in $\cD$, $N\in\bNmax$, and $k\in\bK$. For clarity of exposition, we suppress the argument $\mu$ in this proof.

Take any $1\leq j\leq k$. From (\ref{eq:residual_1_k}), (\ref{eq:residual_2_k}), and (\ref{eq:truth_scheme}), the errors $e^{u,j}_N\in X$ and $e^{p,j}_N\in Y$ satisfy the equations
\begin{align} \label{eq:error_mom}
{\textstyle \frac{1}{\Delta t}}\, m(e^{u,j}_N-e^{u,j-1}_N,v) + a(e^{u,j}_N,v) + b(v,e^{p,j}_N) &= r^{1,j}_N(v) \quad\forall\;v\in X,\\ \label{eq:error_cont}
b(e^{u,j}_N,q) &= r^{2,j}_N(q) \quad\forall\;q\in Y.
\end{align}
By the LBB inf-sup condition (\ref{eq:LBB}) and (\ref{eq:error_mom}), we have
\begin{align} \nonumber
\beta \|e^{p,j}_N\|_Y &\leq \sup_{v\in X} \frac{b(v, e^{p,j}_N)}{\|v\|_X} = \sup_{v\in X} \frac{ r^{1,j}_N(v) - a(e^{u,j}_N,v)-\frac{1}{\Delta t} m(e^{u,j}_N-e^{u,j-1}_N,v)}{\|v\|_X}\\ \label{eq:bound_ep_j}
&\leq \|r^{1,j}_N\|_{X'} + {\gamma_a} \|e^{u,j}_N\|_{X} + \frac{\sqrt{\gamma_m}}{\Delta t}  \|e^{u,j}_N-e^{u,j-1}_N\|_\mu,
\end{align}
where the last inequality follows from the Cauchy--Schwarz inequality for the inner product $m(\cdot,\cdot)$, (\ref{eq:gamma_m}), and (\ref{eq:gamma_a}). We then set $v=e^{u,j}_N$, $q=e^{p,j}_N$ in (\ref{eq:error_mom}), (\ref{eq:error_cont}) and subtract the second from the first equation such that
\begin{align*}
{\textstyle \frac{1}{\Delta t}}\, m(e^{u,j}_N-e^{u,j-1}_N,e^{u,j}_N) + \|e^{u,j}_N\|^2_{X,\mu} & = r^{1,j}_N(e^{u,j}_N) - r^{2,j}_N(e^{p,j}_N) \\
&\leq \|r^{1,j}_N\|_{X'} \|e^{u,j}_N\|_X + \|r^{2,j}_N\|_{Y'}\|e^{p,j}_N\|_Y.
\end{align*}
Applying now (\ref{eq:bound_ep_j}) and (\ref{eq:alpha_a}) yields
\begin{multline*} 
{\textstyle \frac{1}{\Delta t}}\,m(e^{u,j}_N-e^{u,j-1}_N,e^{u,j}_N) + \|e^{u,j}_N\|^2_{X,\mu}\\ 
 \leq \frac{1}{\beta} \|r^{1,j}_N\|_{X'}\|r^{2,j}_N\|_{Y'}+ \bigg(\|r^{1,j}_N\|_{X'} + \frac{{\gamma_a}}{\beta}\|r^{2,j}_N\|_{Y'}\bigg) \frac{\|e^{u,j}_N\|_{X,\mu}}{\sqrt{\alpha_a}} \\
+ \frac{1}{\Delta t}\frac{\sqrt{\gamma_m}}{\beta} \|r^{2,j}_N\|_{Y'} \|e^{u,j}_N-e^{u,j-1}_N\|_\mu,
\end{multline*}
which can be further bounded from Young's inequality by
\begin{multline*} 
 \leq \frac{1}{\beta} \|r^{1,j}_N\|_{X'}\|r^{2,j}_N\|_{Y'}+ \frac{1}{2\alpha_a}\bigg(\|r^{1,j}_N\|_{X'} + \frac{{\gamma_a}}{\beta}\|r^{2,j}_N\|_{Y'}\bigg)^2 + \frac{1}{2} \|e^{u,j}_N\|^2_{X,\mu} \\
+\frac{1}{2\Delta t}\frac{\gamma_m}{\beta^2} \|r^{2,j}_N\|^2_{Y'} + \frac{1}{2\Delta t} \|e^{u,j}_N-e^{u,j-1}_N\|^2_\mu.
\end{multline*}
Rearranging terms, the inequality now reads
\begin{multline*}
\frac{1}{\Delta t} \left(\|e^{u,j}_N\|^2_\mu - \|e^{u,j-1}_N\|^2_\mu \right) + \|e^{u,j}_N\|^2_{X,\mu}\\
\leq \frac{\|r^{1,j}_N\|^2_{X'} }{\alpha_a}+ \frac{2}{\beta} \bigg(1+ {\frac{\gamma_a}{\alpha_a}}\bigg) \|r^{1,j}_N\|_{X'}\|r^{2,j}_N\|_{Y'} + \bigg(\frac{\gamma_m}{\Delta t} + \frac{\gamma_a^2}{\alpha_a} \bigg)\frac{\|r^{2,j}_N\|^2_{Y'}}{\beta^2},
\end{multline*}
and the result follows from applying the sum $\sum_{j=1}^k$, $e^{u,0}_N=0$, and (\ref{eq:constants_LB_UB}).
\end{proof}

In the special case of a symmetric problem, the error bounds given in Proposition~\ref{prpstn:errbnd} can be improved (see also \cite{Gerner:2012fk}). We may then derive the following result.

\begin{proposition} \label{prpstn:errbnd_sym}
Let $a(\cdot,\cdot;\mu)$ be symmetric for all $\mu\in\cD$. For any given $\mu\in\cD$, $N\in\bNmax$, $k\in\bK$, and $\alpha_a^\LB(\mu)$, $\gamma^\UB_a(\mu)$, $\beta^\LB(\mu)$, $\gamma_m^\UB(\mu)$ satisfying (\ref{eq:constants_LB_UB}), we define 
\begin{multline} \label{eq:energy_errbnd_sym}
\Delta^{{\rm sym},k}_N(\mu) \equiv \Bigg[ \Delta t \sum_{j=1}^k \frac{\|r^{1,j}_N(\cdot;\mu)\|^2_{X'}}{{\alpha^\LB_a(\mu)}}\\
 + \frac{2}{\beta^\LB(\mu)}\Bigg(1+\sqrt{\frac{\gamma^\UB_a(\mu)}{\alpha^\LB_a(\mu)}}\Bigg) \|r^{1,j}_N(\cdot;\mu)\|_{X'}\|r^{2,j}_N(\cdot;\mu)\|_{Y'}\\
 + \Bigg(\frac{\gamma^\UB_m(\mu)}{\Delta t} + {\gamma^\UB_a(\mu)}\Bigg) \frac{\|r^{2,j}_N(\cdot;\mu)\|^2_{Y'}}{(\beta^\LB(\mu))^2}\Bigg]^{1/2}.
\end{multline}
Then, $\Delta^{{\rm sym},k}_N(\mu)$ represents an upper bound for the error $e^{u}_N(\mu)$ measured in the ``spatio-temporal'' energy norm (\ref{eq:energy_norm}),
\begin{equation} \label{eq:rig_energy_errbnd_sym}
\|e^{u}_N(\mu)\|_{\ell^2(0,k;X)} \leq \Delta^{{\rm sym},k}_N(\mu) \quad\forall\; k\in\bK, \;\mu\in\cD, \;N\in\bNmax.
\end{equation}
\end{proposition}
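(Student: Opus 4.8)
\emph{Proposed proof.} The plan is to run the proof of Proposition~\ref{prpstn:errbnd} almost verbatim, changing only the one estimate where the symmetry of $a(\cdot,\cdot;\mu)$ can be brought to bear. Fix $\mu\in\cD$ (suppressed below), $N\in\bNmax$, $k\in\bK$, and $1\leq j\leq k$. The error pair $(e^{u,j}_N,e^{p,j}_N)$ again satisfies the residual equations~(\ref{eq:error_mom})--(\ref{eq:error_cont}); as before we shall test~(\ref{eq:error_mom}) with $v=e^{u,j}_N$, (\ref{eq:error_cont}) with $q=e^{p,j}_N$, subtract, bound the right-hand side, and conclude by a telescoping sum over $j$.

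The single new ingredient is a sharper bound for $\|e^{p,j}_N\|_Y$. Since $a(\cdot,\cdot;\mu)$ is symmetric and coercive it is an inner product on $X$, so the Cauchy--Schwarz inequality gives $a(e^{u,j}_N,v)\leq\|e^{u,j}_N\|_{X,\mu}\|v\|_{X,\mu}$, and with $\|v\|_{X,\mu}\leq\sqrt{\gamma_a}\,\|v\|_X$ (from~(\ref{eq:gamma_a})) we get $\sup_{v\in X}a(e^{u,j}_N,v)/\|v\|_X\leq\sqrt{\gamma_a}\,\|e^{u,j}_N\|_{X,\mu}$. Using this in place of the coarser bound $\gamma_a\|e^{u,j}_N\|_X$ of~(\ref{eq:bound_ep_j}) --- and noting that it is genuinely sharper since $\sqrt{\gamma_a}\,\|\cdot\|_{X,\mu}\leq\gamma_a\|\cdot\|_X$ --- the LBB condition~(\ref{eq:LBB}) together with~(\ref{eq:error_mom}) yields
\begin{equation*}
\beta\,\|e^{p,j}_N\|_Y \;\leq\; \|r^{1,j}_N\|_{X'} + \sqrt{\gamma_a}\,\|e^{u,j}_N\|_{X,\mu} + \frac{\sqrt{\gamma_m}}{\Delta t}\,\|e^{u,j}_N-e^{u,j-1}_N\|_\mu .
\end{equation*}

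From here the argument proceeds as in Proposition~\ref{prpstn:errbnd}. Bounding the right-hand side of the subtracted equation by $\|r^{1,j}_N\|_{X'}\|e^{u,j}_N\|_X+\|r^{2,j}_N\|_{Y'}\|e^{p,j}_N\|_Y$, passing $\|e^{u,j}_N\|_X$ to the energy norm via~(\ref{eq:alpha_a}), and inserting the sharpened pressure bound, the coefficient of $\|e^{u,j}_N\|_{X,\mu}$ becomes $\|r^{1,j}_N\|_{X'}/\sqrt{\alpha_a}+\sqrt{\gamma_a}\,\|r^{2,j}_N\|_{Y'}/\beta$. Young's inequality now produces the \emph{perfect square} $\big(\|r^{1,j}_N\|_{X'}/\sqrt{\alpha_a}+\sqrt{\gamma_a}\,\|r^{2,j}_N\|_{Y'}/\beta\big)^2$ in place of the expansion of $\alpha_a^{-1}\big(\|r^{1,j}_N\|_{X'}+\gamma_a\|r^{2,j}_N\|_{Y'}/\beta\big)^2$, and a second Young step handles the $\|r^{2,j}_N\|_{Y'}\|e^{u,j}_N-e^{u,j-1}_N\|_\mu$ term exactly as before. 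Invoking the identity $m(e^{u,j}_N-e^{u,j-1}_N,e^{u,j}_N)=\tfrac12\big(\|e^{u,j}_N\|^2_\mu-\|e^{u,j-1}_N\|^2_\mu+\|e^{u,j}_N-e^{u,j-1}_N\|^2_\mu\big)$, the difference terms $\|e^{u,j}_N-e^{u,j-1}_N\|^2_\mu$ cancel and half of $\|e^{u,j}_N\|^2_{X,\mu}$ is absorbed, leaving
\begin{multline*}
\frac{1}{\Delta t}\big(\|e^{u,j}_N\|^2_\mu-\|e^{u,j-1}_N\|^2_\mu\big)+\|e^{u,j}_N\|^2_{X,\mu}
\;\leq\;\frac{\|r^{1,j}_N\|^2_{X'}}{\alpha_a}\\
+\frac{2}{\beta}\Big(1+\sqrt{\tfrac{\gamma_a}{\alpha_a}}\Big)\|r^{1,j}_N\|_{X'}\|r^{2,j}_N\|_{Y'}
+\Big(\frac{\gamma_m}{\Delta t}+\gamma_a\Big)\frac{\|r^{2,j}_N\|^2_{Y'}}{\beta^2}.
\end{multline*}
Multiplying by $\Delta t$, summing over $j=1,\dots,k$, telescoping with $e^{u,0}_N=0$, and replacing $\alpha_a,\gamma_a,\beta,\gamma_m$ by $\alpha_a^\LB,\gamma_a^\UB,\beta^\LB,\gamma_m^\UB$ according to~(\ref{eq:constants_LB_UB}) then gives~(\ref{eq:rig_energy_errbnd_sym}).

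I do not anticipate a serious obstacle: the proof is a controlled modification of the previous one. The one conceptual point to get right is that the symmetry of $a(\cdot,\cdot;\mu)$ is precisely what licenses the Cauchy--Schwarz step $a(e^{u,j}_N,v)\leq\|e^{u,j}_N\|_{X,\mu}\|v\|_{X,\mu}$ (with only $a(e^{u,j}_N,v)\leq\gamma_a\|e^{u,j}_N\|_X\|v\|_X$ available otherwise, which is what forces the weaker constants in Proposition~\ref{prpstn:errbnd}). The only bookkeeping to watch is that each surrogate constant enters the per-step estimate with the right monotonicity --- lower bounds in denominators, upper bounds in numerators --- so that the final substitution via~(\ref{eq:constants_LB_UB}) preserves the inequality; this is immediate from the signs in the displayed estimate.
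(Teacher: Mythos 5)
Your proof is correct and follows essentially the same route as the paper: the only change from Proposition~\ref{prpstn:errbnd} is to exploit symmetry via the Cauchy--Schwarz inequality for $a(\cdot,\cdot;\mu)$, giving $\beta\|e^{p,j}_N\|_Y\leq\|r^{1,j}_N\|_{X'}+\sqrt{\gamma_a}\|e^{u,j}_N\|_{X,\mu}+\frac{\sqrt{\gamma_m}}{\Delta t}\|e^{u,j}_N-e^{u,j-1}_N\|_\mu$ in place of~(\ref{eq:bound_ep_j}), and then Young's inequality, the telescoping sum with $e^{u,0}_N=0$, and the substitution~(\ref{eq:constants_LB_UB}) proceed exactly as in the paper's argument. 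Your bookkeeping of the per-step estimate and the monotonicity of the surrogate constants is accurate.
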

\begin{proof}
Following the lines of the previous proof, we may now apply the Cauchy--Schwarz inequality for the inner product $a(\cdot,\cdot)$ to obtain
\begin{equation*} 
\beta \|e^{p,j}_N\|_Y \leq \|r^{1,j}_N\|_{X'} + \sqrt{\gamma_a} \|e^{u,j}_N\|_{X,\mu} + \frac{\sqrt{\gamma_m}}{\Delta t}  \|e^{u,j}_N-e^{u,j-1}_N\|_\mu,
\end{equation*}
instead of (\ref{eq:bound_ep_j}). Proceeding as before, this yields
\begin{multline*} 
{\textstyle \frac{1}{\Delta t}}\,m(e^{u,j}_N-e^{u,j-1}_N,e^{u,j}_N) + \|e^{u,j}_N\|^2_{X,\mu}\\ 
 \leq \frac{1}{\beta} \|r^{1,j}_N\|_{X'}\|r^{2,j}_N\|_{Y'}+ \bigg(\frac{\|r^{1,j}_N\|_{X'}}{\sqrt{\alpha_a}} + \frac{\sqrt{\gamma_a}}{\beta}\|r^{2,j}_N\|_{Y'}\bigg) \|e^{u,j}_N\|_{X,\mu} \\
+ \frac{1}{\Delta t}\frac{\sqrt{\gamma_m}}{\beta} \|r^{2,j}_N\|_{Y'} \|e^{u,j}_N-e^{u,j-1}_N\|_\mu,
\end{multline*}
and the statement again follows from applying Young's inequality, the sum $\sum_{j=1}^k$, {$e^{u,0}_N=0$}, and (\ref{eq:constants_LB_UB}).
\end{proof}

\subsubsection{$\varepsilon>0$} \label{sss:errbnds_penalty}

We here derive rigorous upper bounds for the error $e^{\varepsilon}_N(\mu)$ measured in the ``spatio-temporal'' energy norm
\begin{align}
 \label{eq:energy_norm_penalty}
\|(v^j,q^j)_{j\in\bK}\|_{\ell^2(0,k;Z)} \equiv \Bigg(\|v^k\|^2_\mu + \Delta t \sum_{j=1}^k \|v^j\|^2_{X,\mu} + \varepsilon\, \|q^j\|^2_{Y,\mu}\Bigg)^{1/2},
\end{align}
where $(v^j,q^j)_{j\in\bK}\subseteq Z$, $k\in\bK$. 

In addition to the dual norms of the residuals (\ref{eq:residual_1_k}) and (\ref{eq:residual_2_k}), we here also rely on (Online-)efficient lower (and upper) bounds to the truth coercivity constants (\ref{eq:alpha_a}) and (\ref{eq:alpha_c}),
\begin{align} \label{eq:alpha_LB}
\setlength\arraycolsep{2pt}
\begin{array}{ccccc}
\alpha_a^\LB(\mu) & \leq &  \alpha_a(\mu) & \leq & \alpha_a^\UB(\mu),\\[0.5ex]
\alpha_c^\LB(\mu) & \leq &  \alpha_c(\mu) & \leq & \alpha_c^\UB(\mu),
\end{array}
\quad\forall\;\mu\in\cD,
\end{align}
to formulate our RB {\em a posteriori} error bounds.

To demonstrate the differences to the case where $\varepsilon=0$, we recall the following result together with its proof (see \cite{Gerner:2012uq}).

\begin{proposition} \label{prpstn:errbnds_penalty}
For any given $\mu\in\cD$, $N\in\bNmax$, $k\in\bK$, and $\alpha_a^\LB(\mu)$, $\alpha^\LB_c(\mu)$ satisfying (\ref{eq:alpha_LB}), we define 
\begin{equation} \label{eq:energy_errbnd_penalty}
\Delta^{\varepsilon,k}_N(\mu) \equiv \Bigg(\Delta t \sum_{j=1}^k \frac{\|r^{1,j}_{N}(\cdot;\mu)\|^2_{X'}}{\alpha^\LB_a(\mu)} + \frac{\|r^{2,j}_{N}(\cdot;\mu)\|^2_{Y'}}{\varepsilon \alpha^\LB_c(\mu)}\Bigg)^{1/2}.
\end{equation}
Then, $\Delta^{\varepsilon,k}_N(\mu)$ represents an upper bound for the error $e^{\varepsilon}_N(\mu)$ measured in the ``spatio-temporal'' energy norm (\ref{eq:energy_norm_penalty}),
\begin{equation} \label{eq:rig_energy_errbnd_penalty}
\|e^{\varepsilon}_N(\mu)\|_{\ell^2(0,k;Z)} \leq \Delta^{\varepsilon,k}_N(\mu) \quad\forall\; k\in\bK, \;\mu\in\cD, \;N\in\bNmax.
\end{equation}
\end{proposition}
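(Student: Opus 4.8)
The plan is to mimic the structure of the $\varepsilon=0$ proof (Proposition \ref{prpstn:errbnd}) but to exploit the penalty term $-\varepsilon\,c(\cdot,\cdot;\mu)$ to control the pressure error \emph{directly}, so that no inf-sup constant $\beta^{\LB}$ is needed. First I would fix $\mu\in\cD$, $N\in\bNmax$, $k\in\bK$, suppress $\mu$, and take any $1\le j\le k$. Subtracting the RB scheme (\ref{eq:rb_scheme}) from the truth scheme (\ref{eq:truth_scheme}) and using the residual definitions (\ref{eq:residual_1_k})--(\ref{eq:residual_2_k}), the errors satisfy the perturbed saddle-point identities
\begin{align*}
{\textstyle\frac{1}{\Delta t}}\,m(e^{u,j}_N-e^{u,j-1}_N,v) + a(e^{u,j}_N,v) + b(v,e^{p,j}_N) &= r^{1,j}_N(v)\quad\forall\,v\in X,\\
b(e^{u,j}_N,q) - \varepsilon\, c(e^{p,j}_N,q) &= r^{2,j}_N(q)\quad\forall\,q\in Y.
\end{align*}
The key algebraic step is to test the first equation with $v=e^{u,j}_N$, the second with $q=e^{p,j}_N$, and subtract. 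The cross term $b(e^{u,j}_N,e^{p,j}_N)$ cancels, leaving
\[
{\textstyle\frac{1}{\Delta t}}\,m(e^{u,j}_N-e^{u,j-1}_N,e^{u,j}_N) + a(e^{u,j}_N,e^{u,j}_N) + \varepsilon\, c(e^{p,j}_N,e^{p,j}_N) = r^{1,j}_N(e^{u,j}_N) - r^{2,j}_N(e^{p,j}_N).
\]

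Next I would bound each piece. On the left, use $a(e^{u,j}_N,e^{u,j}_N)=\|e^{u,j}_N\|^2_{X,\mu}$, $\varepsilon c(e^{p,j}_N,e^{p,j}_N)=\varepsilon\|e^{p,j}_N\|^2_{Y,\mu}$, and the polarization/Cauchy--Schwarz identity for the $m$-inner product, ${\textstyle\frac{1}{\Delta t}}m(e^{u,j}_N-e^{u,j-1}_N,e^{u,j}_N)\ge {\textstyle\frac{1}{2\Delta t}}(\|e^{u,j}_N\|^2_\mu-\|e^{u,j-1}_N\|^2_\mu)$ (the standard $2(a-b)a\ge a^2-b^2$ trick). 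On the right, Cauchy--Schwarz in the dual pairing plus norm equivalence gives $r^{1,j}_N(e^{u,j}_N)\le \|r^{1,j}_N\|_{X'}\|e^{u,j}_N\|_X \le \|r^{1,j}_N\|_{X'}\,\|e^{u,j}_N\|_{X,\mu}/\sqrt{\alpha_a}$ and similarly $-r^{2,j}_N(e^{p,j}_N)\le \|r^{2,j}_N\|_{Y'}\|e^{p,j}_N\|_Y\le \|r^{2,j}_N\|_{Y'}\,\|e^{p,j}_N\|_{Y,\mu}/\sqrt{\alpha_c}$. Then Young's inequality $xy\le \tfrac14 x^2+y^2$ applied to each product, absorbing $\tfrac12\|e^{u,j}_N\|^2_{X,\mu}$ and $\tfrac{\varepsilon}{2}\|e^{p,j}_N\|^2_{Y,\mu}$ into the left-hand side, yields after rearranging
\[
{\textstyle\frac{1}{\Delta t}}(\|e^{u,j}_N\|^2_\mu-\|e^{u,j-1}_N\|^2_\mu) + \|e^{u,j}_N\|^2_{X,\mu} + \varepsilon\|e^{p,j}_N\|^2_{Y,\mu} \le \frac{\|r^{1,j}_N\|^2_{X'}}{\alpha_a} + \frac{\|r^{2,j}_N\|^2_{Y'}}{\varepsilon\,\alpha_c}.
\]
Summing over $j=1,\dots,k$, multiplying by $\Delta t$, using the telescoping of the $\|e^{u,j}_N\|^2_\mu$ terms together with $e^{u,0}_N=0$, and finally replacing $\alpha_a,\alpha_c$ by $\alpha^\LB_a,\alpha^\LB_c$ via (\ref{eq:alpha_LB}), gives exactly $\|e^\varepsilon_N(\mu)\|^2_{\ell^2(0,k;Z)}\le (\Delta^{\varepsilon,k}_N(\mu))^2$, which is (\ref{eq:rig_energy_errbnd_penalty}).

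I do not expect any serious obstacle here: unlike the $\varepsilon=0$ case, the penalty term hands us coercivity in the pressure for free, so the inf-sup detour of (\ref{eq:bound_ep_j}) is entirely avoided and the estimate is almost immediate. The only points requiring a little care are the choice of splitting constants in Young's inequality (one must reserve enough of $\|e^{u,j}_N\|^2_{X,\mu}$ and $\varepsilon\|e^{p,j}_N\|^2_{Y,\mu}$ on the left to absorb the cross terms, while keeping the residual coefficients at exactly $1/\alpha_a$ and $1/(\varepsilon\alpha_c)$) and handling the $m$-difference term with the right polarization bound; both are routine. One should also note in passing that when $\varepsilon>0$ the RB solution exists unconditionally (no stable pair needed), consistent with the hypotheses already in force in \S\ref{ss:error_estimation}.
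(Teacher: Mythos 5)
Your proof is correct and follows essentially the same route as the paper's: test the error equations with $e^{u,j}_N$ and $e^{p,j}_N$, cancel the $b$-cross term so the penalty supplies pressure control, use coercivity of $a(\cdot,\cdot;\mu)$ and $c(\cdot,\cdot;\mu)$, the Cauchy--Schwarz/telescoping bound on the $m$-term, Young's inequality, and then sum in time with $e^{u,0}_N=0$. One small nit: the Young inequality you quote, $xy\le\tfrac14x^2+y^2$, is not the one your absorption step actually uses---the symmetric split $xy\le\tfrac12x^2+\tfrac12y^2$ (as in the paper) is what leaves exactly the residual coefficients $1/\alpha_a(\mu)$ and $1/(\varepsilon\,\alpha_c(\mu))$ after absorbing $\tfrac12\|e^{u,j}_N\|^2_{X,\mu}$ and $\tfrac{\varepsilon}{2}\|e^{p,j}_N\|^2_{Y,\mu}$.
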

\begin{proof}
Let $\mu$ be any parameter in $\cD$, $N\in\bNmax$, and $k\in\bK$. For clarity of exposition, we suppress the argument $\mu$ in this proof.

Take any $1\leq j\leq k$. From (\ref{eq:residual_1_k}), (\ref{eq:residual_2_k}), and (\ref{eq:truth_scheme}), the errors $e^{u,j}_N\in X$ and $e^{p,j}_N\in Y$ satisfy the equations
\begin{align*}
{\textstyle \frac{1}{\Delta t}}\, m(e^{u,j}_N-e^{u,j-1}_N,v) + a(e^{u,j}_{N},v) + b(v,e^{p,j}_{N}) &= r^{1,j}_{N}(v)\quad \forall\;v\in X,\\
b(e^{u,j}_{N},q) -\varepsilon\, c(e^{p,j}_{N},q) & = r^{2,j}_{N}(q) \quad\forall\; q\in Y.
\end{align*}
Setting here $v=e^{u,j}_{N}$, $q=e^{p,j}_{N}$ and subtracting the second from the first equation, we obtain
\begin{multline}\label{eq:inst_penalty_inequ_1}
{\textstyle\frac{1}{\Delta t}}\, m(e^{u,j}_N-e^{u,j-1}_N,e^{u,j}_{N}) + \|e^{u,j}_{N}\|^2_{X,\mu} + \varepsilon\, \|e^{p,j}_{N}\|^2_{Y,\mu} = r^{1,j}_{N}(e^{u,j}_{N}) - r^{2,j}_{N}(e^{p,j}_{N})\\
\leq \|r^{1,j}_{N}\|_{X'} \|e^{u,j}_{N}\|_X + \|r^{2,j}_{N}\|_{Y'} \|e^{p,j}_{N}\|_Y.
\end{multline}
On the right-hand side, we now use (\ref{eq:alpha_a}), (\ref{eq:alpha_c}), and Young's inequality so that
\begin{multline*}
\|r^{1,j}_{N}\|_{X'} \|e^{u,j}_{N}\|_X + \|r^{2,j}_{N}\|_{Y'} \|e^{p,j}_{N}\|_Y\\
 \leq \frac{\|r^{1,j}_{N}\|_{X'}}{\sqrt{\alpha_a}}\|e^{u,j}_{N}\|_{X,\mu} + \frac{\|r^{2,j}_{N}\|_{Y'}}{\sqrt{\alpha_c}}\|e^{p,j}_{N}\|_{Y,\mu}\\
 \leq \frac{1}{2}\left(\frac{\|r^{1,j}_{N}\|^2_{X'}}{\alpha_a} + \|e^{u,j}_{N}\|^2_{X,\mu} + \frac{\|r^{2,j}_{N}\|^2_{Y'}}{\varepsilon \alpha_c} + \varepsilon\,\|e^{p,j}_{N}\|^2_{Y,\mu}\right);
\end{multline*}
on the left-hand side, we use the Cauchy--Schwarz inequality for the inner product $m(\cdot,\cdot)$ followed by Young's inequality so that
\begin{align*}
m(e^{u,j}_N-e^{u,j-1}_N, e^{u,j}_N) \geq \|e^{u,j}_N\|^2_\mu - \|e^{u,j-1}_N\|_\mu\|e^{u,j}_N\|_\mu \geq \frac{1}{2} \left(\|e^{u,j}_N\|^2_\mu-\|e^{u,j-1}_N\|^2_\mu\right).
\end{align*}
Rearranging terms, the inequality (\ref{eq:inst_penalty_inequ_1}) finally reads
\begin{equation*}
{\frac{1}{\Delta t}}\Big(\|e^{u,j}_N\|^2_\mu - \|e^{u,j-1}_N\|^2_\mu\Big) + \|e^{u,j}_{N}\|^2_{X,\mu} + \varepsilon\, \|e^{p,j}_{N}\|^2_{Y,\mu} \leq \frac{\|r^{1,j}_{N}\|^2_{X'}}{\alpha_a} + \frac{\|r^{2,j}_{N}\|^2_{Y'}}{\varepsilon \alpha_c},
\end{equation*}
and the statement follows from applying the sum $\sum_{j=1}^k$, $e^{u,0}_N=0$, and (\ref{eq:alpha_LB}).
\end{proof}

Through the introduction of the penalty term, we thus obtain {\em a posteriori} error bounds that do not depend on inf-sup constants. However, we note that they depend on the penalty parameter $\varepsilon$: As $\varepsilon$ decreases and we approach the nonperturbed problem, (\ref{eq:energy_errbnd_penalty}) suggests a growth by an order of $O(\textstyle \frac{1}{\sqrt{\varepsilon}})$.

\subsection{Offline-Online Computational Procedure} \label{ss:offline_online}

The efficiency of the RB method relies on an Offline-Online computational decomposition strategy. As it is by now standard, we shall only provide a brief summary at this point and refer the reader to, e.g., \cite{Grepl:2005kx,Rozza:2008fv} for further details.
The procedure requires that all involved operators can be affinely expanded with respect to the parameter $\mu$. All $\mu$-independent quantities are formed and stored within a computationally expensive Offline stage, which is performed only once and whose cost depends on the large finite element dimension $\cN$. For any given parameter $\mu\in\cD$, the RB approximation $(u^{\varepsilon,k}_N(\mu),p^{\varepsilon,k}_N(\mu)), k\in\bK$, is then computed within a highly efficient Online stage; the cost does not depend on $\cN$ but only on the much smaller dimension of the RB approximation space.
 The computation of the {\em a posteriori} error bounds consists of two components: the calculation of the residual dual norms $\|r^{1,k}_{N}(\cdot;\mu)\|_{X'}$, $\|r^{2,k}_{N}(\cdot;\mu)\|_{Y'}$, $k\in\bK$, and the calculation of the required lower and upper bounds (\ref{eq:constants_LB_UB}) and (\ref{eq:alpha_LB}), respectively, to the involved constants. The former is again an application of now standard RB techniques that can be found in \cite{Grepl:2005kx,Rozza:2008fv}. The latter is achieved by a successive constraint method (SCM) as proposed in \cite{DBP-Huynh:2007ez}; we also refer the reader to \cite{Gerner:2011fk} for details in our saddle point context.

\subsection{Construction of Reduced Basis Approximation Spaces} \label{ss:construction_RB_spaces}

We now turn to the construction of the RB approximation spaces $X_N, Y_N$, $N\in\bNmax$. The low-dimensional spaces $X_N, Y_N$ are constructed by exploiting the parametric structure of the problem: According to the so-called Lagrange approach, basis functions are essentially given by truth solutions associated with several chosen parameter snapshots. 
However, in our time-dependent setting, $X_N$ and $Y_N$ not only have to appropriately represent the submanifold induced by the parametric dependence but also need to capture the causality associated with evolution in time to provide accurate approximations $(u^{\varepsilon,k}_N(\mu), p^{\varepsilon,k}_N(\mu))$ for $(u^{\varepsilon,k}(\mu),p^{\varepsilon,k}(\mu))$, $k\in\bK$, for any parameter query. Keeping computational cost to a minimum, we aim to achieve this with as few basis functions as possible. 

The POD greedy procedure represents an adaptive sampling process for parabolic problems that properly accounts for temporal and parametric causality: It combines the proper orthogonal decomposition (POD) method in $k$ (see \cite{Kunisch:2001fk,Kunisch:2002bh}) with the greedy procedure in $\mu$ (see \cite{dahmen10:_greedy,Buffa:kx} and  \cite{Gerner:2011fk}). To begin with, we briefly recall the optimality property of the POD as described in \cite{Kunisch:2001fk,Kunisch:2002bh}. For a given finite set $\mathcal{X}_\mathcal{I}\equiv \{\chi_1,\ldots,\chi_{\mathcal{I}}\}\subseteq X$  and $M_X\leq {\rm dim}({\rm span}(\mathcal{X}_{\mathcal{I}}))$, the POD basis of rank $M_X$ consists of $M_X$ $(\cdot,\cdot)_X$-orthonormal basis functions that approximate $\mathcal{X}_{\mathcal{I}}$ best in the sense that
\begin{equation*}
{\rm span}({\rm POD}_X(\mathcal{X}_{\mathcal{I}},M_X)) = \arg \inf_{\substack{\mathcal{X}\,\subseteq\, {\rm span}(\mathcal{X}_{\mathcal{I}})\\ {\rm dim}(\mathcal{X})=M_X}} \left(\frac{1}{\mathcal{I}}\sum_{i=1}^{\mathcal{I}} \inf_{\chi \in \mathcal{X}} \|\chi_i-\chi\|^2_X\right)^{1/2};
\end{equation*}
analogously, we denote by ${\rm POD}_Y(\mathcal{Y}_{\mathcal{I}},M_Y)$ the POD basis of rank $M_Y$ for a finite set \mbox{$\mathcal{Y}_{\mathcal{I}}\subseteq Y$},  $M_Y\leq {\rm dim}({\rm span}(\mathcal{Y}_{\mathcal{I}}))$.
Assuming that we are given a current pair $(X_{N-1}, Y_{N-1})$ of RB approximation spaces, the POD greedy algorithm now proceeds as follows: In compliance with the greedy approach, it detects the parameter $\mu_N$ for which the (Online-)efficient RB error bound attains its maximum over an exhaustive sample $\Sigma\subset \cD$. For a prescribed $\Delta N\in\bK$, we then compute the POD bases of rank $\Delta N$ associated with the truth solutions $u^{\varepsilon,k}(\mu_N)$ and $p^{\varepsilon,k}(\mu_N)$, $k\in\bK$; more specifically, we compute ${\rm POD}_X(E^u,\Delta N)$ and ${\rm POD}_Y(E^p,\Delta N)$ for
\begin{align*}
E^u &\equiv \{\, u^{\varepsilon,k}(\mu_N) - \Pi_{X_{N-1}} u^{\varepsilon,k}(\mu_N) \mid k\in\bK \,\},\\
E^p &\equiv \{\, p^{\varepsilon,k}(\mu_N) - \Pi_{Y_{N-1}} p^{\varepsilon,k}(\mu_N) \mid k\in\bK \,\},
\end{align*}
where $\Pi_{X_{N-1}}$ and $\Pi_{Y_{N-1}}$ refer to the $(\cdot,\cdot)_X$- and $(\cdot,\cdot)_Y$-orthogonal projections on the current RB approximation spaces $X_{N-1}$ and $Y_{N-1}$, respectively. Finally, the $\Delta N$ POD basis functions are appended to $X_{N-1}$ and $Y_{N-1}$, and we obtain a subsequent pair $(X_N,Y_N)$. This process is then repeated until a prescribed error tolerance is satisfied. We refer the reader to \cite{Grepl:2005kx,Haasdonk:2011ffk,haasdonk08:_reduc} for a detailed discussion of the POD greedy procedure, and to \cite{knezevic09:_reduc_basis_approx_poster_error,knezevic10:_certif_reduc_basis_method_fokker} for an application to the Boussinesq and Fokker--Planck equations.

\begin{algorithm}[!bp]                      
\caption{Adaptive Sampling Procedure for $\varepsilon=0$}          
\label{alg:modified_POD}                           
\begin{algorithmic}[1]                    
\STATE Choose $\Sigma \subset \cD$, $\delta_{\rm tol}, \delta^\beta_{\rm tol}\in(0,1)$,  $\Delta N\in\bK$, and $\mu_1 \in\Sigma$
\STATE Set $N\leftarrow 0$, $\cD_N\leftarrow \{ \}$, $\cD'\leftarrow \{ \}$, $N_Y \leftarrow 0$, $Y_N \leftarrow \{ \}$, $N_X \leftarrow 0$, $X_N\leftarrow \{ \}$
\REPEAT
\STATE $N \leftarrow N+1$, $\cD_{N} \leftarrow \cD_{N-1} \cup \{\mu_N\}$
\STATE $E^p = \{\, p^{k}(\mu_N) - \Pi_{Y_{N-1}} p^{k}(\mu_N) \mid  k\in\bK \,\}$
\STATE $N_Y \leftarrow N_Y+\Delta N$, $Y_N \leftarrow Y_{N-1} \oplus {\rm span}({\rm POD}_Y(E^p,\Delta N))$
\IF {$\mu_N\notin \cD'$,}
\STATE $E^u = \{\, u^{k}(\mu_N) - \Pi_{X_{N-1}} u^{k}(\mu_N) \mid  k\in\bK \,\}$
\STATE $N_X \leftarrow N_X+\Delta N$, $X_N \leftarrow X_{N-1} \oplus {\rm span}({\rm POD}_X(E^u,\Delta N))$
\ENDIF
\WHILE {({\bf true})}
\FORALL {$\mu \in \Sigma$}
\STATE Compute $(u^k_N(\mu),p^k_N(\mu))$, $k\in\bK$, $\Delta_N(\mu)$, and 
\STATE $\hat d^\beta_N(\mu) \equiv \max \left\{\,\frac{\beta^\UB(\mu) - \beta_N(\mu)}{\beta^\UB(\mu)},0\,\right\}$ (cf.~(\ref{eq:dist_betaN}))
\ENDFOR
\STATE $ \mu'_{N} \equiv \arg \max_{\mu \in \Sigma}\; \Delta_N(\mu)$, $ \mu^* \equiv \arg \max_{\mu\in\Sigma} \hat d^\beta_N(\mu)$
\IF {$\hat d^\beta_N(\mu^*) < \delta^\beta_{\rm tol}$,}
\STATE $\mu_{N+1} \equiv \mu'_N$
\STATE {\bf break}
\ENDIF
\IF {$ \min_{\mu\in \cD'\cup\cD_N} \frac{|\mu'_N-\mu|}{|\mu|} \geq 0.1\%$,}
\STATE $\cD'\leftarrow \cD' \cup \{\mu'_N\}$
\STATE $E^u = \{\, u^{k}(\mu'_N) - \Pi_{X_{N}} u^{k}(\mu'_N) \mid  k\in\bK \,\}$
\STATE $N_X\leftarrow N_X+\Delta N$, $X_N \leftarrow X_{N} \oplus {\rm span}({\rm POD}_X(E^u,\Delta N))$
\ELSE
\STATE $N_X\leftarrow N_X+1$, $X_N \leftarrow X_{N}\oplus {\rm span}\{\,T_{\mu^*} \varrho_N(\mu^*) \,\}$ (see (2.28), (2.36) in  \cite{Gerner:2011fk})
\ENDIF
\ENDWHILE
\UNTIL {$\Delta_N(\mu_{N+1}) < \delta_{\rm tol}$}
\STATE $N_{\rm max} \leftarrow N$
\end{algorithmic}
\end{algorithm}

\begin{algorithm}[!bp]                      
\caption{Adaptive Sampling Procedure for $\varepsilon>0$}          
\label{alg:modified_POD_penalty}                           
\begin{algorithmic}[1]                    
\STATE Choose $\Sigma \subset \cD$, $\delta_{\rm tol}\in(0,1)$, $\delta^\kappa_{\rm tol}>0$, $\Delta N\in\bK$, and $\mu_1 \in\Sigma$
\STATE Set $N\leftarrow 0$, $\cD_N\leftarrow \{ \}$, $\cD'\leftarrow \{ \}$, $N_Y \leftarrow 0$, $Y_N \leftarrow \{ \}$, $N_X \leftarrow 0$, $X_N\leftarrow \{ \}$
\REPEAT
\STATE $N \leftarrow N+1$, $\cD_{N} \leftarrow \cD_{N-1} \cup \{\mu_N\}$
\STATE $E^p = \{\, p^{\varepsilon,k}(\mu_N) - \Pi_{Y_{N-1}} p^{\varepsilon,k}(\mu_N) \mid  k\in\bK \,\}$
\STATE $N_Y \leftarrow N_Y+\Delta N$, $Y_N \leftarrow Y_{N-1} \oplus {\rm span}({\rm POD}_Y(E^p,\Delta N))$
\IF {$\mu_N\notin \cD'$,}
\STATE $E^u = \{\, u^{\varepsilon,k}(\mu_N) - \Pi_{X_{N-1}} u^{\varepsilon,k}(\mu_N) \mid  k\in\bK \,\}$
\STATE $N_X \leftarrow N_X+\Delta N$, $X_N \leftarrow X_{N-1} \oplus {\rm span}({\rm POD}_X(E^u,\Delta N))$
\ENDIF
\WHILE {({\bf true})}
\FORALL {$\mu \in \Sigma$}
\STATE Compute $(u^{\varepsilon,k}_N(\mu),p^{\varepsilon,k}_N(\mu))$, $k\in\bK$, $\Delta_N(\mu)$, and $\kappa^\varepsilon_N(\mu)$ (see (\ref{eq:kappa_N}))
\ENDFOR
\STATE $ \mu'_{N} \equiv \arg \max_{\mu \in \Sigma}\; \Delta_N(\mu)$, $ \mu^* \equiv \arg \max_{\mu\in\Sigma} \kappa^\varepsilon_N(\mu)$
\IF {$\kappa^\varepsilon_N(\mu^*) < \delta^\kappa_{\rm tol}$,}
\STATE $\mu_{N+1} \equiv \mu'_N$
\STATE {\bf break}
\ENDIF
\IF {$ \min_{\mu\in \cD'\cup\cD_N} \frac{|\mu'_N-\mu|}{|\mu|} \geq 0.1\%$,}
\STATE $\cD'\leftarrow \cD' \cup \{\mu'_N\}$
\STATE $E^u = \{\, u^{\varepsilon,k}(\mu'_N) - \Pi_{X_N} u^{\varepsilon,k}(\mu'_N) \mid  k\in\bK \,\}$
\STATE $N_X\leftarrow N_X+\Delta N$, $X_N \leftarrow X_{N} \oplus {\rm span}({\rm POD}_X(E^u,\Delta N))$
\ELSE
\STATE $N_X\leftarrow N_X+1$, $X_N \leftarrow X_{N}\oplus {\rm span}\{\,T_{\mu^*} \varrho_N(\mu^*) \,\}$ (see (2.28), (2.36) in  \cite{Gerner:2011fk})
\ENDIF
\ENDWHILE
\UNTIL {$\Delta_N(\mu_{N+1}) < \delta_{\rm tol}$}
\STATE $N_{\rm max} \leftarrow N$
\end{algorithmic}
\end{algorithm}

For our saddle point problems, we now couple the above procedure with stabilization techniques developed in \cite{Gerner:2011fk}; here (see also \cite{Gerner:2012fk}), best convergence results were achieved by Algorithm~3 that aims to stabilize $X_N, Y_N$ adaptively through an enrichment of the primal RB approximation space with additional truth solutions. According to these observations, we now apply the sampling procedures presented in Algorithm~\ref{alg:modified_POD} and Algorithm~\ref{alg:modified_POD_penalty}. In case of $\varepsilon=0$, we use the distance $d^\beta_N(\mu)$ (see~\cite{Gerner:2011fk}),
\begin{equation}\label{eq:dist_betaN}
d^\beta_N(\mu) \equiv \max\left\{\frac{\beta(\mu)-\beta_N(\mu)}{\beta(\mu)},0\right\}, \quad\mu\in\cD,
\end{equation}
of the inf-sup constants $\beta_N(\mu)$ to the truth inf-sup constants $\beta(\mu)$ as an indicator whether a current pair of RB approximation spaces needs to be stabilized; the exact procedure is given in Algorithm~\ref{alg:modified_POD}. In case of $\varepsilon>0$, numerical results in \cite{Gerner:2012ag} showed that the inf-sup constants $\beta_N(\mu)$ may not be appropriate indicators for an ill-conditioned system but an adaptive sampling process should be based rather on the condition number $\kappa^\varepsilon_N(\mu)$,
\begin{equation} \label{eq:kappa_N}
\kappa^\varepsilon_N(\mu) \equiv \frac{\sigma^{\varepsilon,\rm max}_N(\mu)}{\sigma^{\varepsilon,\rm min}_N(\mu)},\quad \varepsilon>0,\quad\mu\in\cD,\; N\in\bNmax;
\end{equation}
here, $\sigma^{\varepsilon,\rm max}_N(\mu)$ and $\sigma^{\varepsilon,\rm min}_N(\mu)$ denote the maximum and minimum singular values of the corresponding RB system matrix, respectively. Algorithm~\ref{alg:modified_POD_penalty} now presents a possibility how this could be realized.

\section{Model Problem} \label{s:model_problem}

We consider a Stokes flow in a two-dimensional microchannel with an obstacle as introduced in \cite{veroy10:_stokes_penalty}; evolution in time is now induced by a time-dependent velocity profile on the inflow boundary.

Let $\mu$ be any parameter in $\cD$. For the physical domain $\tilde\Omega$ and a given time interval $[0,T]$, $T>0$, we now seek to find the (inhomogeneous) velocity $\tilde u_{\rm e, inh}:\tilde\Omega\times (0,T)\to\mathbb{R}^2$ and the pressure $\tilde p_{\rm e}:\tilde\Omega\times (0,T)\to\mathbb{R}$ satisfying
\begin{align} \label{eq:strong_inh_mom}
\frac{\partial \tilde u_{\rm e, inh}}{\partial t} - \tilde\Delta \tilde u_{\rm e, inh} + \tilde\nabla \tilde p_{\rm e} &= 0 \quad\text{in }\tilde\Omega\times (0,T),\\ \label{eq:strong_inh_cont}
\tilde \nabla\cdot \tilde u_{\rm e,inh} &= 0 \quad\text{in }\tilde\Omega\times (0,T),
\end{align}
subject to initial conditions $\tilde u_{\rm e, inh}(\cdot,0) = 0$ and with boundary conditions
\begin{align} \label{eq:inh_bc} \begin{split}
&\tilde u_{\rm e, inh}(\tilde x,t) = H(t) h(\tilde x) \quad\text{on } \Gamma_{\rm in} \times (0,T),\quad \tilde{u}_{\rm e, inh}=0 \quad\text{on }\tilde\Gamma_0 \times (0,T), \\
&\hspace{20ex}\frac{\partial \tilde u_{\rm e, inh}}{\partial \tilde n} = \tilde p_{\rm e} \tilde n \quad\text{on }\Gamma_{\rm out} \times (0,T);
\end{split} \end{align}
here, $\tilde\Delta$ and $\tilde\nabla$ denote the Laplacian and gradient operator over the physical domain $\tilde\Omega$, $\tilde n$ is the unit outward normal, $h:\mathbb{R}^2\to\mathbb{R}^2$ is given by $h(x) \equiv (4x_2(1-x_2),0)$ for all $x=(x_1,x_2)\in \mathbb{R}^2$, and we choose $H:[0,T]\to\mathbb{R}$ with $H(t) \equiv t (\sin(2\pi t)+1)$ for all $t\in[0,T]$. 
According to the setting introduced in \cite{veroy10:_stokes_penalty}, we also consider the following perturbation of the problem (\ref{eq:strong_inh_mom})--(\ref{eq:inh_bc}): For a sufficiently small $\varepsilon>0$, we introduce a penalty term into the continuity equation (\ref{eq:strong_inh_cont}) such that
\begin{equation}\label{eq:strong_inh_cont_penalty}
\tilde \nabla\cdot \tilde u^\varepsilon_{\rm e,inh} = -\varepsilon\, p^\varepsilon_{\rm e} \quad\text{in }\tilde\Omega\times (0,T).
\end{equation}

We now follow the steps discussed in \cite{veroy10:_stokes_penalty}: We choose the lifting function $\tilde u^H_{\rm L} \equiv H \tilde u_{\rm L}$ where $\tilde u_{\rm L}$ is defined as in \cite{veroy10:_stokes_penalty}, and transform the problem statement for the homogeneous velocity $\tilde u^\varepsilon_{\rm e} \equiv \tilde u^\varepsilon_{\rm e, inh}-\tilde u^H_{\rm L}$ to an equivalent problem posed over the reference domain $\Omega$. Furthermore, as required for the time-discrete setting introduced in \S \ref{s:general_problem_statement}, we divide the time interval $[0,T]$ into $K$ subintervals of equal length $\Delta t \equiv T/K$, and consider a backward Euler method for time integration. The problems (\ref{eq:strong_inh_mom})--(\ref{eq:inh_bc}) and (\ref{eq:strong_inh_mom}), (\ref{eq:strong_inh_cont_penalty}), (\ref{eq:inh_bc}) may thus be written as a parametrized saddle point problem of the form (\ref{eq:exact_scheme}). 
Here, for any $\mu\in\cD$, the bilinear forms $a(\cdot,\cdot;\mu)$, $b(\cdot,\cdot;\mu)$, and $c(\cdot,\cdot;\mu)$ are given as in \cite{veroy10:_stokes_penalty}; accordingly, the bilinear form $m(\cdot,\cdot;\mu):\Xe\times \Xe \to\mathbb{R}$ represents the $L^2$-inner product for vector functions over the physical domain $\tilde\Omega$ formulated on the reference domain $\Omega$,
\begin{equation*}
m(u,v;\mu) = \sum_{s=1}^S \frac{1}{|{\rm det}(A^s(\mu))|} \int_{\Omega^s} u\cdot v\,dx \quad\forall\; u,v\in \Xe,
\end{equation*}
and the linear functionals $f(\cdot;\mu)$ and $g(\cdot;\mu)$ are given by
\begin{align*}
f(v,t;\mu) &= f(v,t) = -H'(t)\int_{\Omega_{\rm L}} u_{\rm L}\cdot v\,dx - H(t) \int_{\Omega_{\rm L}}\frac{\partial u_{{\rm L}i}}{\partial x_j} \frac{\partial v_i}{\partial x_j}\,dx,\\
g(q,t;\mu) &= g(q,t) = H(t) \int_{\Omega_{\rm L}} q \frac{\partial u_{{\rm L}i}}{\partial x_i}\,dx
\end{align*}
for all $v\in\Xe$, $q\in\Ye$, $t\in[0,T]$. We recall that the bilinear forms $a(\cdot,\cdot;\mu)$, $b(\cdot,\cdot;\mu)$, and $c(\cdot,\cdot;\mu)$ then satisfy the assumptions (\ref{eq:gamma_a_e})--(\ref{eq:brezzi_infsup_e}), (\ref{eq:gamma_c_e}), and (\ref{eq:alpha_c_e}). For all $\mu\in\cD$, $m(\cdot,\cdot;\mu)$ defines an inner product on $\Xe$ such that (\ref{eq:pos_def_m_e}) holds true; moreover, there exists a constant $C^{\rm e}(\mu)>0$ from the Poincar\'{e} inequality (see, e.g., \cite{Quarteroni:2008fk}) such that  
\begin{equation*} 
 m(v,v;\mu) \leq C^{\rm e}(\mu) \, a(v,v;\mu) \quad\forall\; v\in \Xe \quad\forall\;\mu\in\cD,
\end{equation*}
and thus (\ref{eq:gamma_m_e}) is satisfied with
\begin{equation*}
\gamma^{\rm e}_m(\mu) \equiv \sup_{u\in X_{\rm e}}\sup_{v\in \Xe} \frac{m(u,v;\mu)}{\|u\|_\Xe \|v\|_{X_{\rm e}}} \leq C^{\rm e}(\mu) \,\gamma^{\rm e}_a(\mu) < \infty \quad\forall\;\mu\in\cD.
\end{equation*}

Choosing the truth approximation spaces $X$ and $Y$ as the standard conforming $\mathbb{P}_2$-$\mathbb{P}_1$ Taylor--Hood finite element approximation subspaces \cite{Taylor:1973fk} over the regular triangulation $\cT_\Omega$, we ensure that also (\ref{eq:LBB}) is satisfied (see, e.g., \cite{Brezzi:1991fk,Ern:2004jl,Raviart:1986uq,Quarteroni:2008fk}) and therefore recover the situation described in \S \ref{ss:truth}.

\section{Numerical Results} \label{s:numerical_results}

We now apply the RB methodology developed in \S \ref{s:RB_method} to our model problem introduced in \S \ref{s:model_problem}. We set $T=1$ and consider a constant time step size $\Delta t$ corresponding to $K=100$ time levels. The truth discretization is based on a fine mesh with a total of $\cN =$ 72,076 velocity and pressure degrees of freedom.
In this section, all numerical results are attained using the open source software \texttt{rbOOmit}~\cite{Knezevic:2011fk}, an implementation of the RB framework within the C++ parallel finite element library \texttt{libMesh}~\cite{libMeshPaper}.

\subsection{$\varepsilon=0$}

We first turn to the coercivity, continuity, and inf-sup constants required for our RB procedure. We obtain (Online-)efficient lower and upper bounds to $\alpha_a(\mu)$, $\gamma_a(\mu)$, and $\beta(\mu)$ by using the SCM (see \S \ref{ss:offline_online}) with the configurations specified in \cite{Gerner:2011fk}. To estimate the continuity constants $\gamma_m(\mu)$, we apply the method for $M_\alpha=\infty$, $M_+=0$, an exhaustive sample $\Xi\subset\cD$ of size $|\Xi|$ = 4,225, and the SCM tolerance $\epsilon=0.01$ (see \cite{DBP-Huynh:2007ez}). We then obtain accurate (Online-)efficient lower and upper bounds $\gamma_m^\LB(\mu)$ and $\gamma^\UB_m(\mu)$ with $K_{\rm max}=5$.

We now turn to the RB approximation. To build our low-dimensional RB approximation spaces $X_N, Y_N$, $N\in\bNmax$, we apply the POD greedy procedure described in Algorithm~\ref{alg:modified_POD} (see \S \ref{ss:construction_RB_spaces}). The sampling process is based on an exhaustive random sample $\Sigma\subset\cD$ of size $|\Sigma|=$ 4,900, $\Delta N=2$, and $\delta^\beta_{\rm tol}=0.1$; since our Stokes model problem is clearly symmetric, we here in particular use the relative RB {\em a posteriori} error bound $\Delta_N(\mu)\equiv \Delta^{{\rm sym},K}_N(\mu) / \|(u^j_N(\mu))_{j\in\bK}\|_{\ell^2(0,K;X)}$ (see (\ref{eq:energy_norm}), (\ref{eq:energy_errbnd_sym})). 

\begin{figure}[htp]
\centering
\epsfig{file=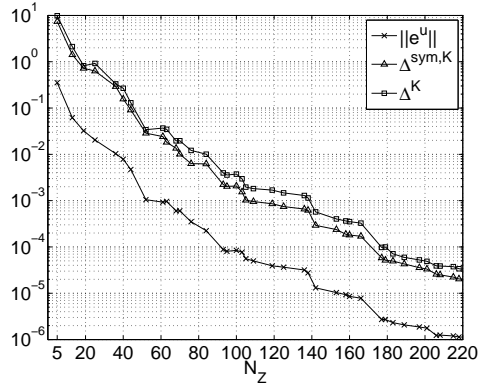,scale=0.35}\\[-3ex]
\caption{Maximum error $\|e^{u}_N(\mu)\|_{\ell^2(0,K;X)}$ (see (\ref{eq:def_errors}), (\ref{eq:energy_norm})) and maximum error bounds $\Delta^{{\rm sym},K}_N(\mu)$ and $\Delta^K_N(\mu)$ (see (\ref{eq:energy_errbnd_sym}) and (\ref{eq:energy_errbnd})) normalized with respect to $\|(u^j(\mu))_{j\in\bK}\|_{\ell^2(0,K;X)}$ shown as functions of $N_Z$; the maximum is taken over 25 parameter values.}
\label{fig:err_errbnd_K}
\end{figure}

\begin{figure}[htp]
\centerline{\footnotesize $N=13$ $(N_Z = 76)$ \hspace{13ex} $N=20$ $(N_Z = 109)$ \hspace{13ex} $N=41$ $(N_Z = 226)$}
\epsfig{file=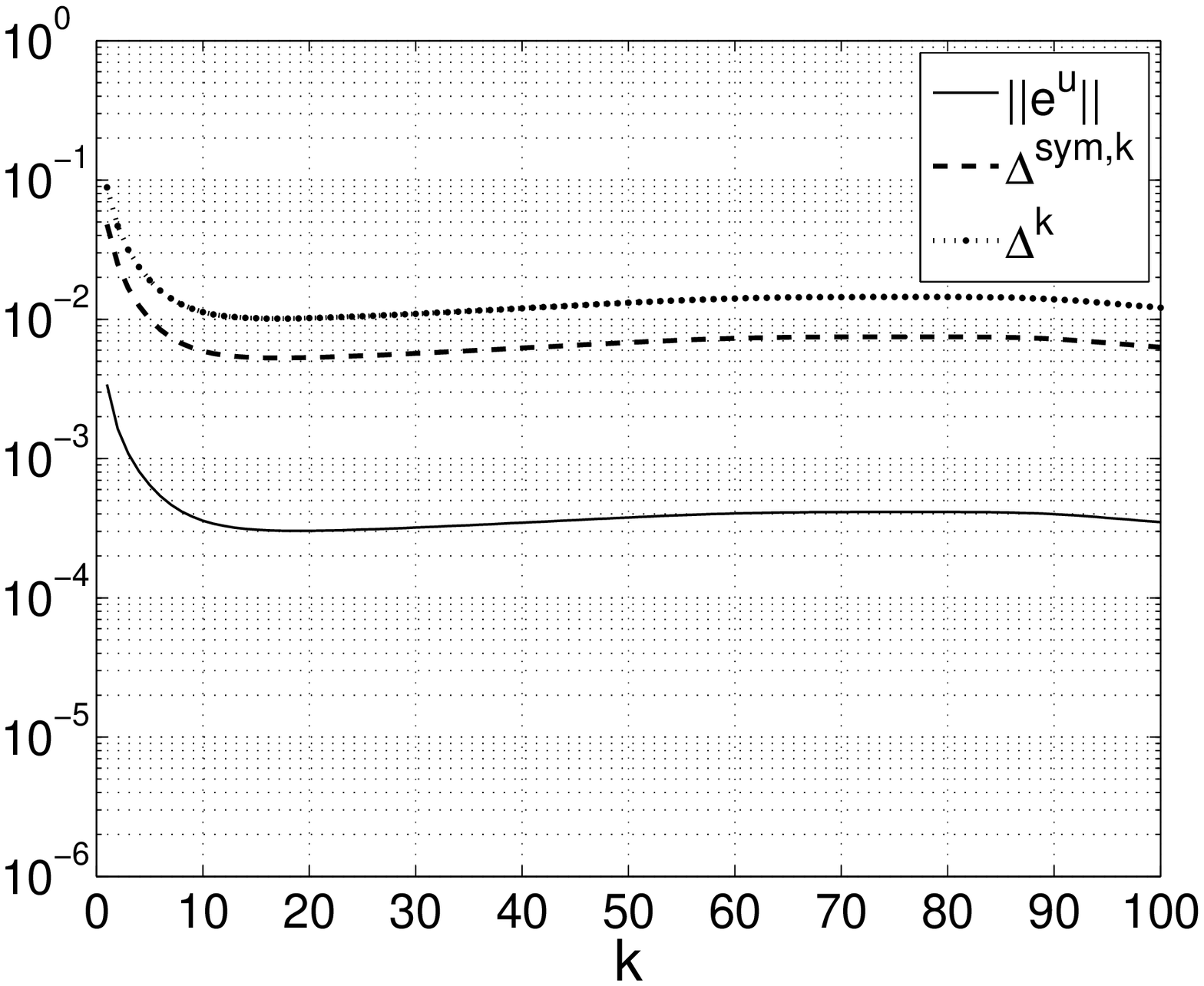,scale=0.24}\hfill
\epsfig{file=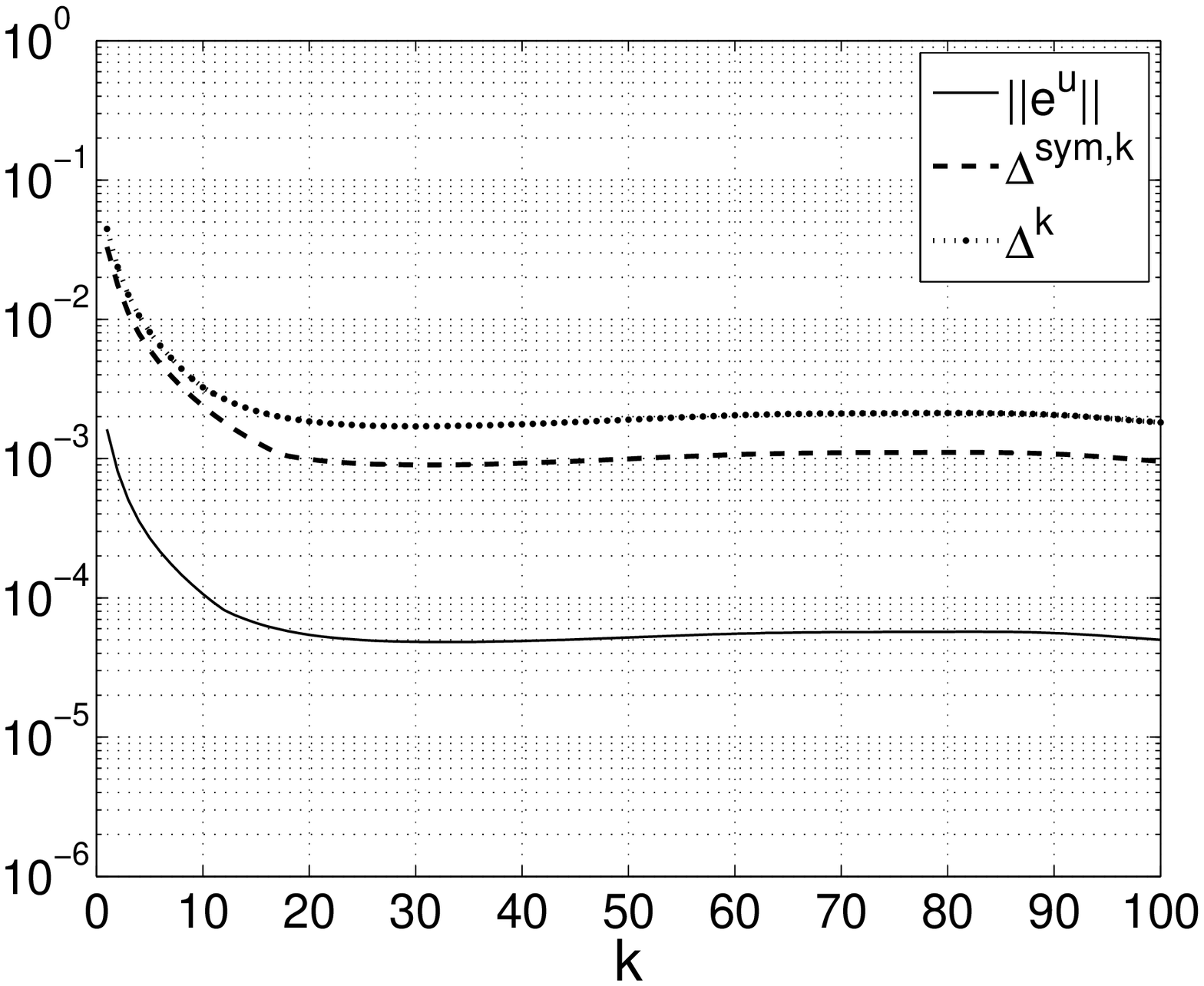,scale=0.24}\hfill
\epsfig{file=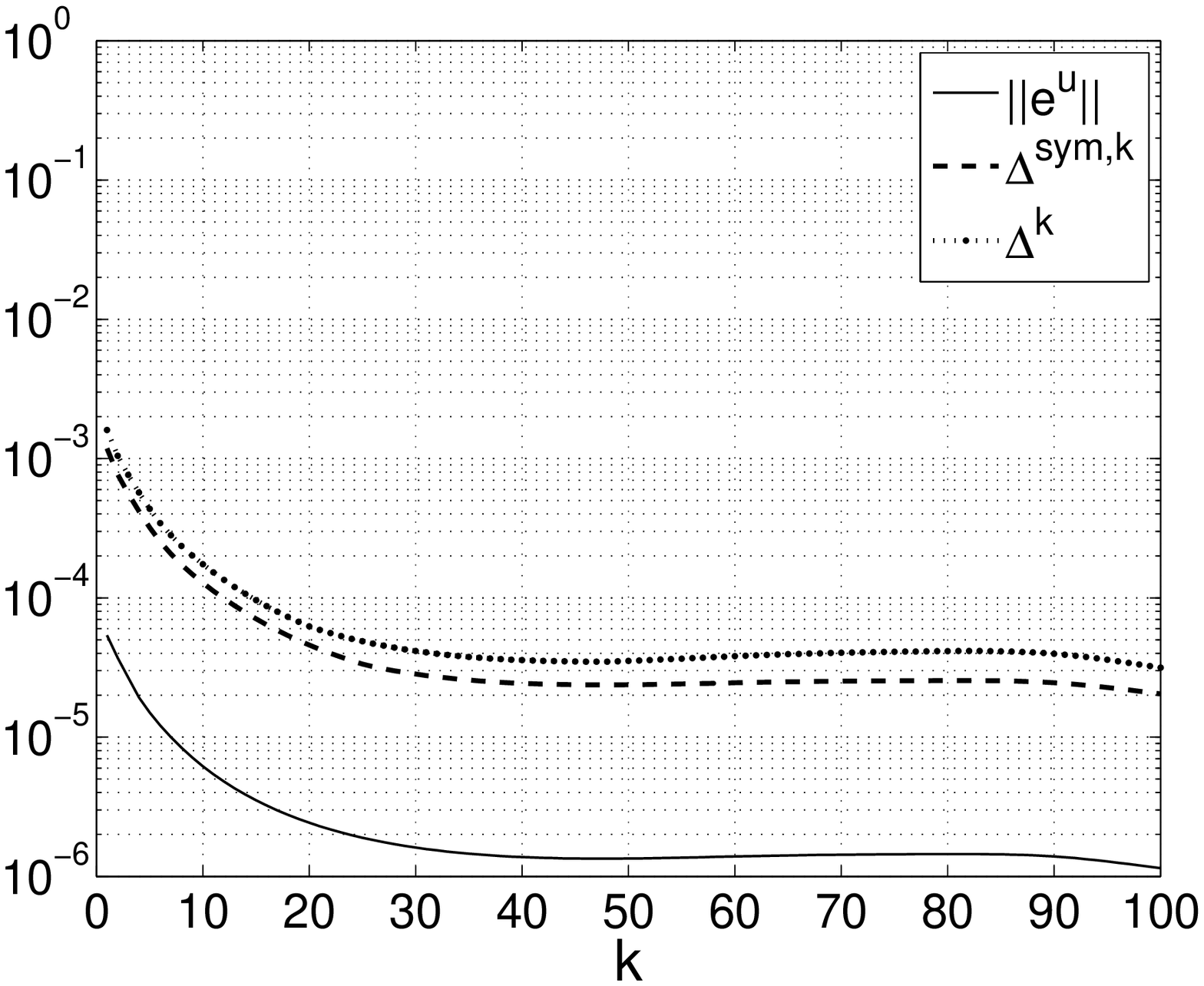,scale=0.24}\hfill \\[-5ex]
\caption{Maximum error $\|e^{u}_N(\mu)\|_{\ell^2(0,k;X)}$ (see (\ref{eq:def_errors}), (\ref{eq:energy_norm})) and maximum error bounds $\Delta^{{\rm sym},k}_N(\mu)$ and $\Delta^k_N(\mu)$ (see (\ref{eq:energy_errbnd_sym}) and (\ref{eq:energy_errbnd})) normalized with respect to $\|(u^j(\mu))_{j\in\bK}\|_{\ell^2(0,k;X)}$ shown as functions of $k\in\bK$ for several values of $N$; the maximum is taken over 25 parameter values.}
\label{fig:err_errbnd_N}
\end{figure}

\begin{table}[htp]
\centering \footnotesize
\begin{tabular}{@{}c|c|c|c|c|c|c|c@{}}
\multicolumn{8}{l}{(a) Effectivities $\eta^{{\rm sym},k}_N(\mu)$ associated with $\Delta^{{\rm sym},k}_N(\mu)$}\\[1ex]
\hline
$N$ & $N_Z$ & $k=10$ & $k=20$ & $k=40$ & $k=60$ & $k=80$ & $k=100$ \\
\hline
$5$ & $36$ & $28.90$ & $30.38$ & $31.30$ & $31.61$ & $31.63$ & $31.13$\\
$10$ & $63$ & $30.41$ & $31.35$ & $32.20$ & $32.37$ & $32.16$ & $31.91$\\
$15$ & $93$ & $25.83$ & $28.05$ & $29.39$ & $29.51$ & $29.44$ & $29.38$\\
$20$ & $109$ & $23.31$ & $24.25$ & $26.79$ & $27.23$ & $27.08$ & $27.21$\\
$25$ & $142$ & $25.29$ & $28.15$ & $29.54$ & $29.73$ & $29.66$ & $29.64$\\
$30$ & $177$ & $26.28$ & $26.05$ & $28.77$ & $30.58$ & $30.70$ & $30.60$\\
$35$ & $201$ & $24.77$ & $24.86$ & $26.68$ & $27.36$ & $27.51$ & $27.81$\\
$40$ & $222$ & $24.18$ & $23.96$ & $24.19$ & $25.03$ & $25.51$ & $25.54$\\
\hline
\end{tabular}\\[1ex]
\begin{tabular}{@{}c|c|c|c|c|c|c|c@{}}
\multicolumn{8}{l}{(b) Effectivities $\eta^{k}_N(\mu)$ associated with $\Delta^k_N(\mu)$}\\[1ex]
\hline
$N$ & $N_Z$ & $k=10$ & $k=20$ & $k=40$ & $k=60$ & $k=80$ & $k=100$ \\
\hline
$5$ & $36$ & $39.58$ & $41.51$ & $42.76$ & $43.20$ & $43.22$ & $42.53$\\
$10$ & $63$ & $39.90$ & $41.66$ & $42.82$ & $43.21$ & $43.00$ & $42.90$\\
$15$ & $93$ & $38.58$ & $43.21$ & $44.64$ & $45.13$ & $45.14$ & $45.07$\\
$20$ & $109$ & $32.59$ & $34.10$ & $37.11$ & $37.60$ & $37.17$ & $37.19$\\
$25$ & $142$ & $35.52$ & $39.26$ & $42.93$ & $42.73$ & $42.63$ & $43.55$\\
$30$ & $177$ & $34.31$ & $34.27$ & $36.41$ & $37.31$ & $39.67$ & $40.64$\\
$35$ & $201$ & $32.86$ & $33.59$ & $36.52$ & $37.39$ & $36.65$ & $37.54$\\
$40$ & $222$ & $33.67$ & $33.76$ & $35.25$ & $35.42$ & $34.78$ & $35.17$\\
\hline
\end{tabular}\\[1ex]
\caption{Maximum effectivities (a)~$\eta^{{\rm sym},k}_N(\mu)\equiv \Delta^{{\rm sym},k}_N(\mu)/\|e^{u}_N(\mu)\|_{\ell^2(0,k;X)}$ (see (\ref{eq:rig_energy_errbnd_sym})) and (b)~$\eta^{k}_N(\mu)\equiv \Delta^{k}_N(\mu)/\|e^{u}_N(\mu)\|_{\ell^2(0,k;X)}$ (see (\ref{eq:rig_energy_errbnd})) for several values of $k\in\bK$ and $N$; the maximum is taken over 25 parameter values.}
\label{tbl:effectivities}
\end{table}

Figure~\ref{fig:err_errbnd_K} now shows the maximum error $\|e^{u}_N(\mu)\|_{\ell^2(0,K;X)}$ (see (\ref{eq:def_errors})) in the RB velocity approximations and associated error bounds $\Delta^{{\rm sym},K}_N(\mu)$ and $\Delta^{K}_N(\mu)$ (see (\ref{eq:energy_errbnd})) as functions of the dimension $N_Z$; Figure~\ref{fig:err_errbnd_N} presents the maximum error $\|e^{u}_N(\mu)\|_{\ell^2(0,k;X)}$ and associated error bounds $\Delta^{{\rm sym},k}_N(\mu)$, $\Delta^k_N(\mu)$ as functions of $k\in\bK$ for several values of $N_Z$. 
First, we observe that the RB error and error bounds are roughly uniform in time (see Fig.~\ref{fig:err_errbnd_N}) and decrease rapidly as $N_Z$ increases (see Fig.~\ref{fig:err_errbnd_K}). We obtain stable, rapidly convergent RB approximations, and rigorous {\em a posteriori} error bounds that reflect the behavior of the error very accurately. Second, the error bounds are tight. To quantify this statement, we present in Table~\ref{tbl:effectivities} maximum effectivities associated with $\Delta^{{\rm sym},k}_N(\mu)$ and $\Delta^k_N(\mu)$ for several values of $k$ and $N$. We notice that their values remain more or less constant with $k$. Moreover, as in the stationary case (see \cite{Gerner:2012fk}), we benefit from exploiting the symmetry of the problem: Effectivities range from 33 to 45 in case of $\Delta^k_N(\mu)$ (see Table~\ref{tbl:effectivities}(b)) and improve in case of $\Delta^{{\rm sym},k}_N(\mu)$ by roughly 10 (see Table~\ref{tbl:effectivities}(a)). 
We emphasize at this point that the error bound formulations in (\ref{eq:energy_errbnd}) and (\ref{eq:energy_errbnd_sym}) in fact suggest a growth in time. In practice, this behavior seems rather weak (see Table~\ref{tbl:effectivities}) but may be investigated in greater detail within future work.

We now discuss the Online computation times for the proposed method. For comparison, once the $\mu$-independent parts in the affine expansions of the involved operators have been formed (see \S \ref{ss:offline_online}), direct computation of the truth approximation $(u^k(\mu),p^k(\mu)), k\in\bK$, (i.e., assembly and solution of (\ref{eq:truth_scheme})) requires roughly 30 seconds on a 2.66 GHz Intel Core 2 Duo processor. We initially take a total RB dimension of $N_Z=226$. Once the database has been loaded, the Online calculation of $(u^k_N(\mu), p^k_N(\mu)), k\in\bK$, (i.e., assembly and solution of (\ref{eq:rb_scheme})) and $\Delta^{{\rm sym},k}_N(\mu), k\in\bK$, for any new value of $\mu\in\cD$ takes on average 27.97 and 80.76 milliseconds, respectively, which is in total roughly 270 times faster than direct computation of the truth approximation. Thus, even for this large value of $N_Z$, we obtain significant Online savings. In practice, however, we quite often need not take such a large value of $N_Z$; our rigorous and inexpensive error bounds $\Delta^{{\rm sym},k}_N(\mu)$, $k\in\bK$, allow us to choose the RB dimension just large enough to obtain a desired accuracy. To achieve a prescribed accuracy of at least $1\%$ (resp., $0.1\%$) in the RB approximations $u^k_N(\mu), k\in\bK$, we need $N_Z = 76$ (resp., $N_Z = 109$) (see Fig.~\ref{fig:err_errbnd_K}). Again, once the database has been loaded, the Online calculation of $(u^k_N(\mu), p^k_N(\mu)), k\in\bK$, and $\Delta^{{\rm sym},k}_N(\mu), k\in\bK$, for any new value of $\mu\in\cD$ then takes on average 4.41 (resp., 7.62) and 24.47 (resp., 33.75) milliseconds, respectively, which is in total roughly 1,000 times (resp., 700 times) faster than direct computation of the truth approximation.

\subsection{$\varepsilon>0$}

Again, the SCM (see \S \ref{ss:offline_online}) enables the (Online-)efficient estimation of the coercivity  constants $\alpha_a(\mu)$ and $\alpha_c(\mu)$; as we here use the same configurations, we refer the reader to \cite{Gerner:2012ag} for details in this context. 

To build our low-dimensional RB approximation spaces $X_N, Y_N$, $N\in\bNmax$, we apply the POD greedy procedure described in Algorithm~\ref{alg:modified_POD_penalty} (see \S \ref{ss:construction_RB_spaces}). The sampling process is based on an exhaustive random sample $\Sigma\subset\cD$ of size $|\Sigma|=$ 4,900, $\Delta N=2$, $\delta^\kappa_{\rm tol}=10^3$, and the relative RB {\em a posteriori} error bound $\Delta_N(\mu)=\Delta^{\varepsilon,K}_N(\mu)/\|(u^{\varepsilon,j}_N(\mu))_{j\in\bK}\|_{\ell^2(0,K;Z)}$ (see (\ref{eq:energy_norm_penalty}), (\ref{eq:energy_errbnd_penalty})). 

\begin{figure}[htp]
\centerline{\footnotesize \hspace{4ex} $\varepsilon=10^{-2}$ \hspace{35ex} $\varepsilon = 10^{-3}$}
\centering
\epsfig{file=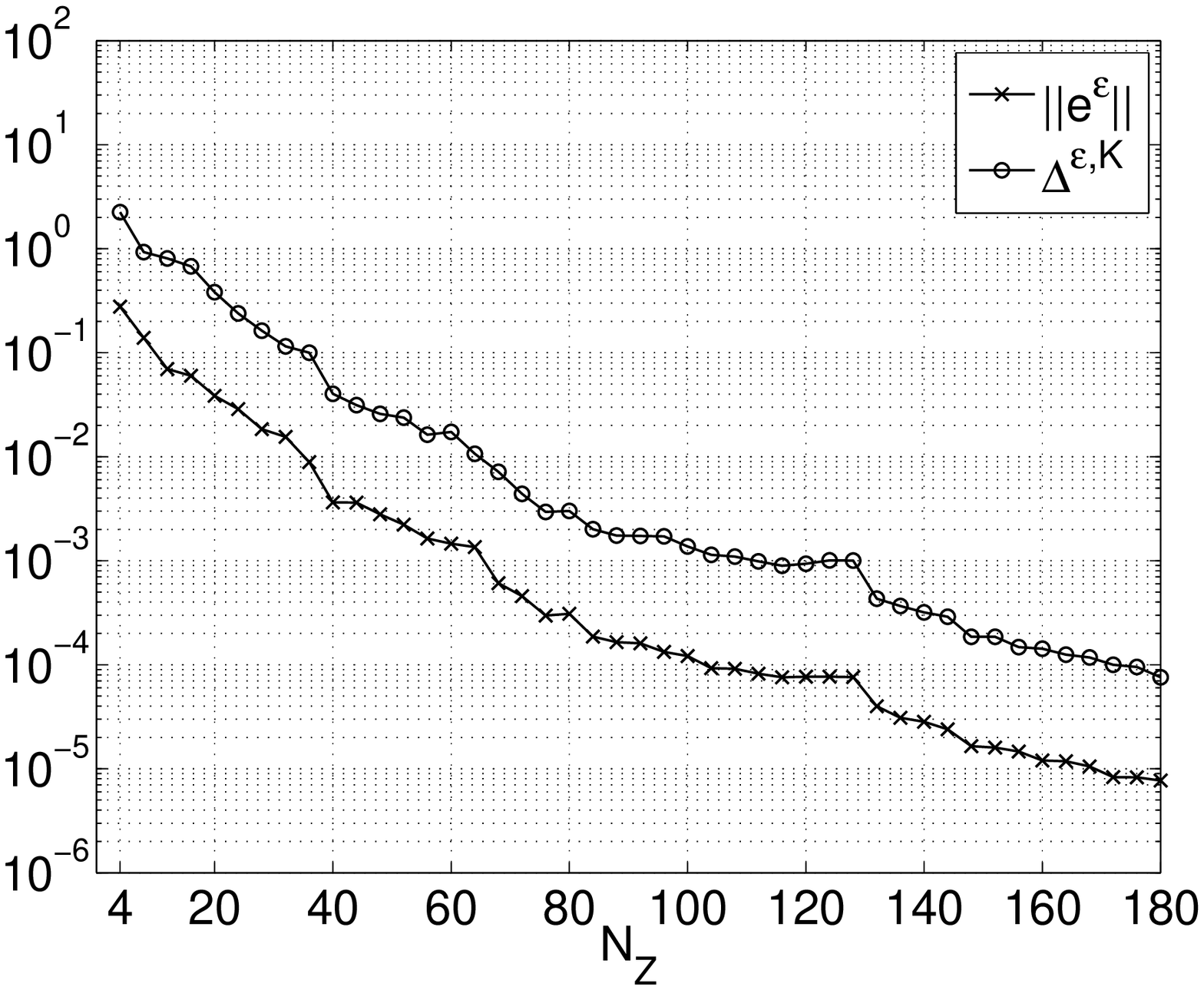,scale=0.3}\quad
\epsfig{file=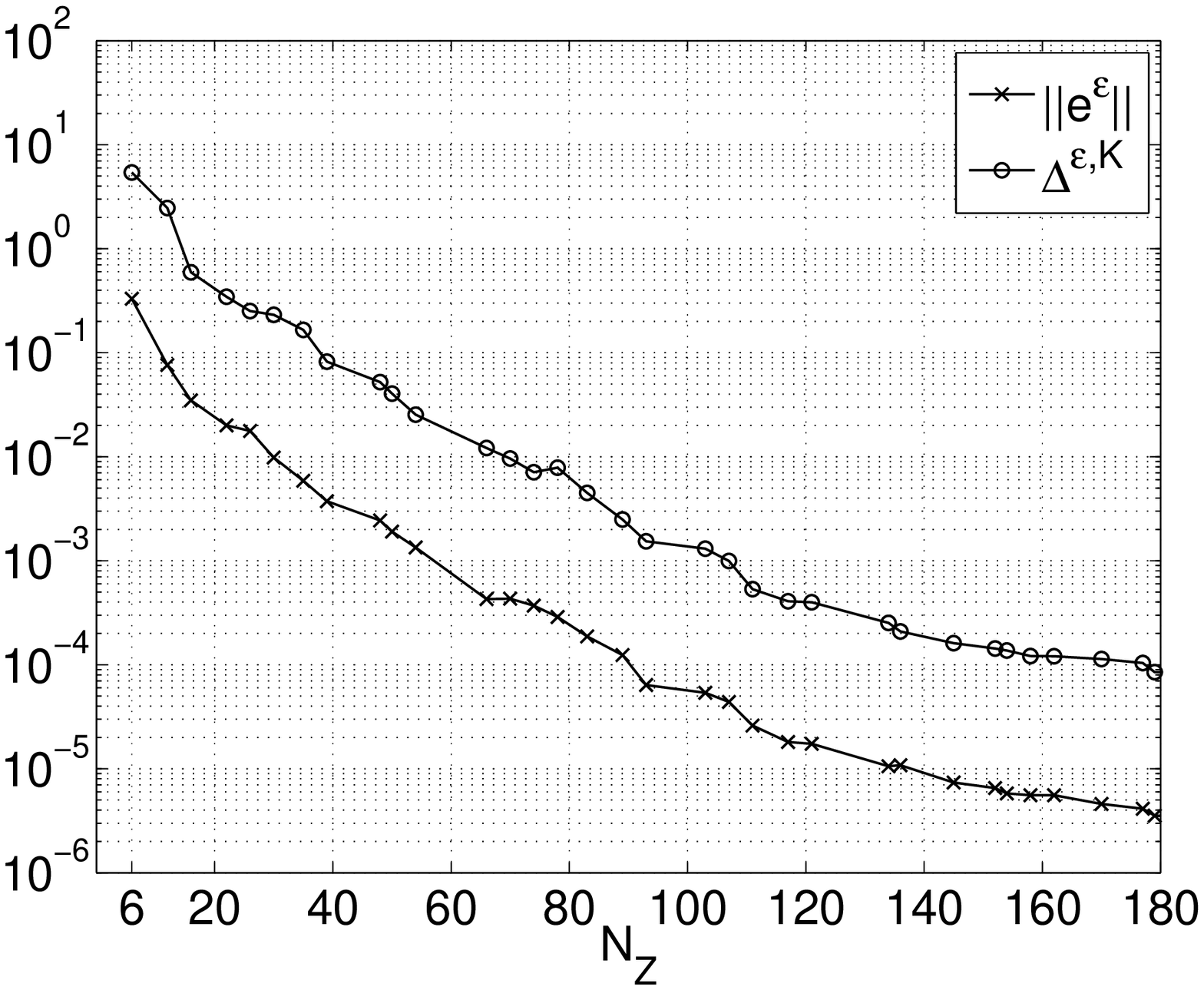,scale= 0.3}
\centerline{\footnotesize \hspace{4ex} $\varepsilon=10^{-4}$ \hspace{35ex} $\varepsilon = 10^{-5}$}
\epsfig{file=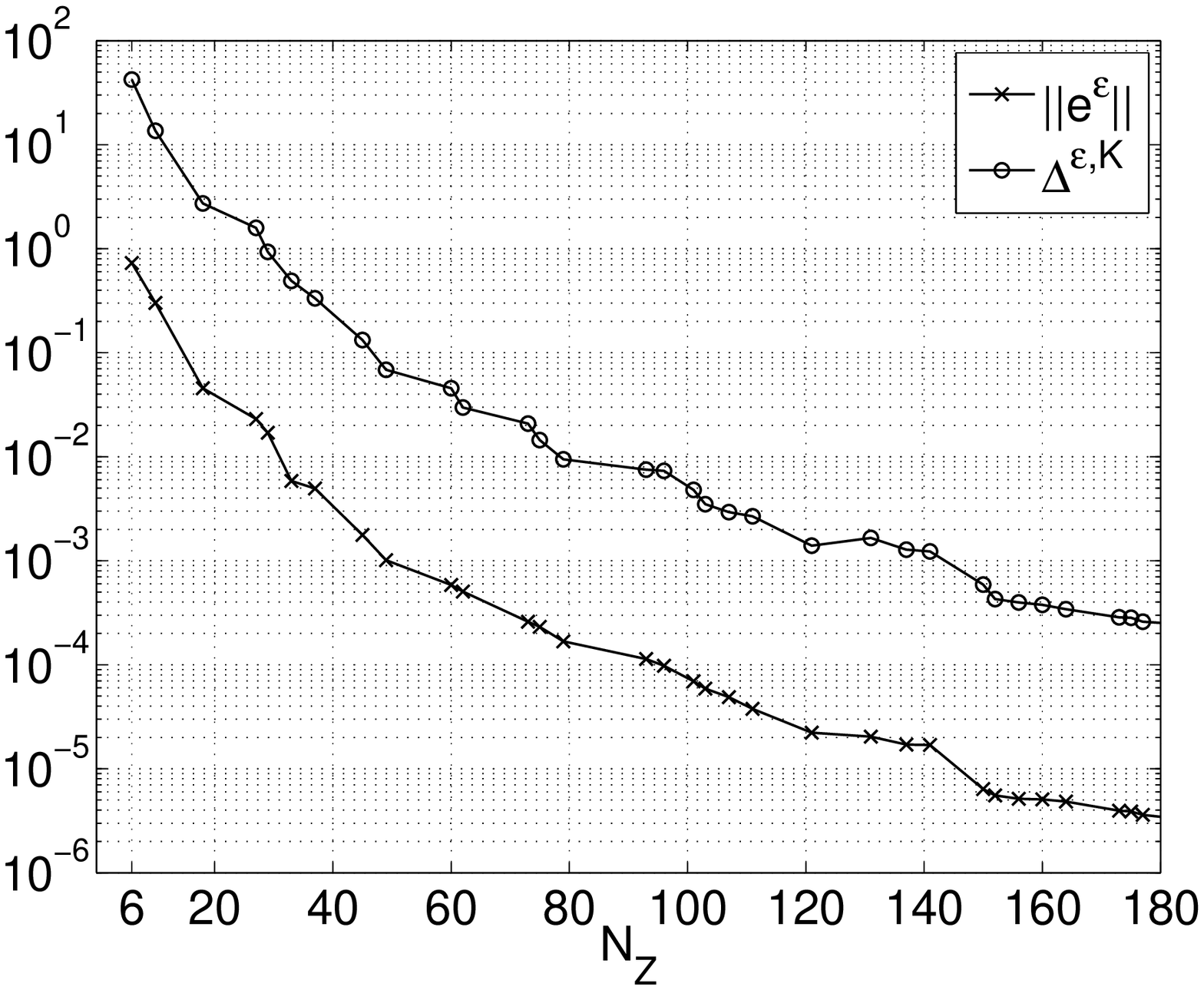,scale=0.3}\quad
\epsfig{file=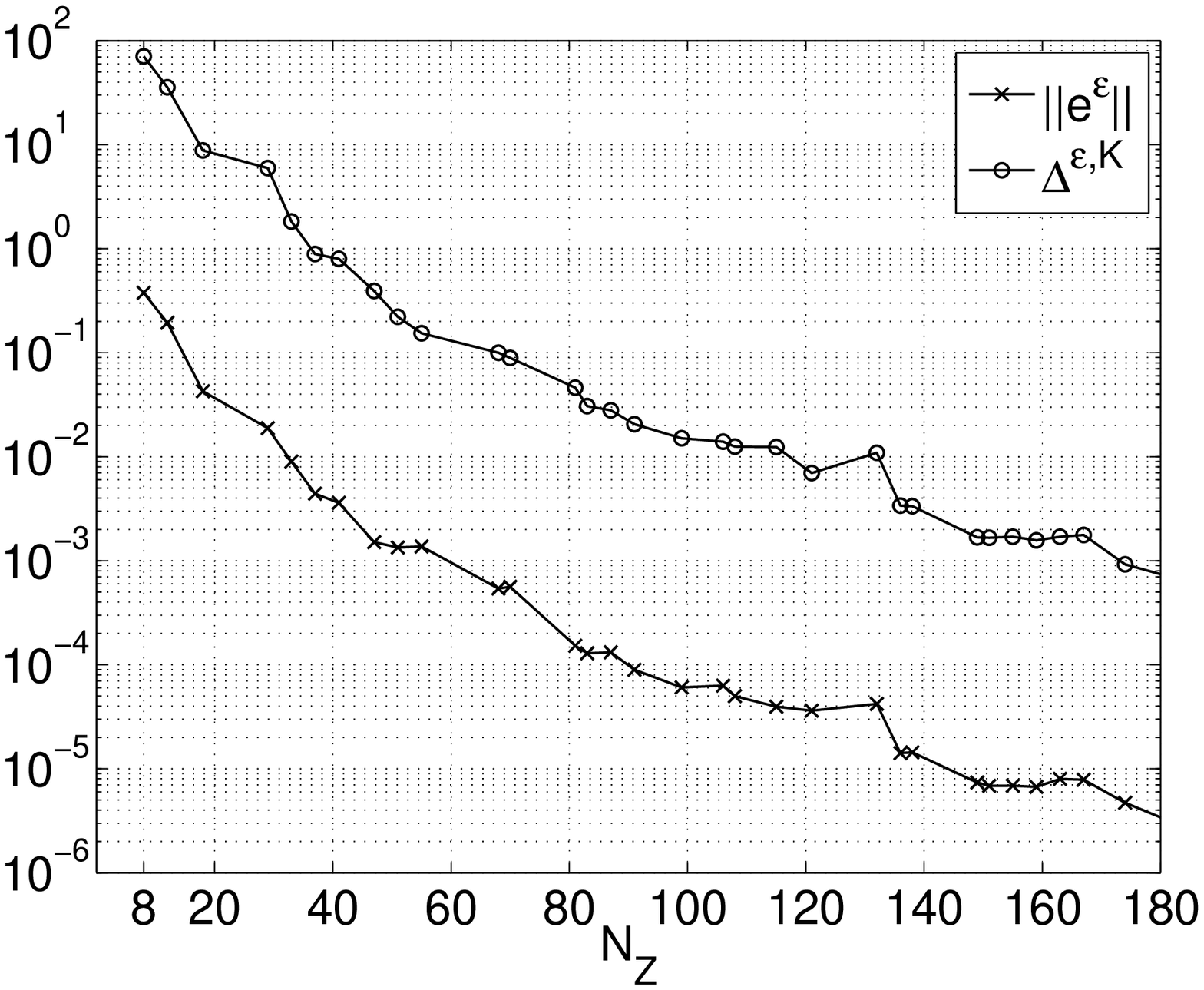,scale= 0.3}\\[-3ex]
\caption{Maximum error $\|e^{\varepsilon}_N(\mu)\|_{\ell^2(0,K;Z)}$ (see (\ref{eq:def_errors}), (\ref{eq:energy_norm_penalty})) and maximum error bound $\Delta^{\varepsilon,K}_N(\mu)$ (see (\ref{eq:energy_errbnd_penalty})) normalized with respect to $\|(u^{\varepsilon,j}(\mu),p^{\varepsilon,j}(\mu))_{j\in\bK}\|_{\ell^2(0,K;Z)}$ shown as functions of $N_Z$ for different values of $\varepsilon$; the maximum is taken over 25 parameter values.}
\label{fig:errbnd_alg6_penalty}
\end{figure}

Figure~\ref{fig:errbnd_alg6_penalty} now shows the maximum error $\|e^\varepsilon_N(\mu)\|_{\ell^2(0,K;Z)}$ (see (\ref{eq:def_errors})) in the RB velocity and pressure approximations together with the associated error bound $\Delta^{\varepsilon,K}_N(\mu)$ as functions of the dimension $N_Z$ for different values of $\varepsilon$. Figure~\ref{fig:err_errbnd_N_penalty} then presents the maximum error $\|e^\varepsilon_N(\mu)\|_{\ell^2(0,k;Z)}$ and associated error bound $\Delta^{\varepsilon,k}_N(\mu)$ as functions of $k\in\bK$ for several values of $N$; note that the latter are chosen as  the values for which the error bounds $\Delta^{\varepsilon,K}_N(\mu)$ guarantee a prescribed accuracy of at least $1\%$ and $0.1\%$ in the RB approximations.
First, we again observe that the RB error and error bounds are roughly uniform in time (see Fig.~\ref{fig:err_errbnd_N_penalty}) and decrease rapidly as $N_Z$ increases (see Fig.~\ref{fig:errbnd_alg6_penalty}). We obtain stable RB approximations whose rapid convergence is not affected by the penalty parameter, and {\em a posteriori} error bounds that are meaningful and rigorous. Second, using the condition numbers $\kappa^\varepsilon_N(\mu)$ as an indicator for an ill-conditioned system, Algorithm~\ref{alg:modified_POD_penalty} guarantees stability by properly accounting for the effects of the penalty term: For $\varepsilon=10^{-2}$, the sampling process recognizes that the RB approximation spaces $X_N,Y_N$ do not have to be stabilized to provide accurate approximations; taking smaller values of $\varepsilon$ and thus approaching the nonpenalized problem, an additional enrichment of the RB approximation space for the velocity becomes more and more necessary.
Third, we see that the error bounds are tight for $\varepsilon=10^{-2}$ but become less sharp as we decrease $\varepsilon$ and our perturbed truth approximation becomes more accurate. However, effectivities exhibit a similar $O\big(\frac{1}{\sqrt{\varepsilon}}\big)$-dependence on the penalty parameter as observed in the stationary case (see \cite{veroy10:_stokes_penalty}) and remain reasonably small for relatively small values of $\varepsilon$. To further quantify this statement, we present in Table~\ref{tbl:effectivities_penalty} the effectivities associated with $\Delta^{\varepsilon,k}_N(\mu)$ for different values of $k$, $N$, and $\varepsilon$. We note that their values are fairly constant with $k$ and $N$  and confirm the $O\big(\frac{1}{\sqrt{\varepsilon}}\big)$-dependence indicated by (\ref{eq:energy_errbnd_penalty}) as well as Fig.~\ref{fig:errbnd_alg6_penalty} and Fig.~\ref{fig:err_errbnd_N_penalty}. The effects of the penalty parameter on the effectivities are thus relatively benign and we obtain useful bounds for reasonably small values of $\varepsilon$.

\begin{figure}[!tp]
\centerline{
\begin{minipage}[c]{0.15\textwidth}
\footnotesize (a)~$\varepsilon = 10^{-2}$
\end{minipage} \quad
\begin{minipage}[c]{0.35\textwidth}
\centerline{\footnotesize $N=17$ ($N_Z=68$)}
\epsfig{file=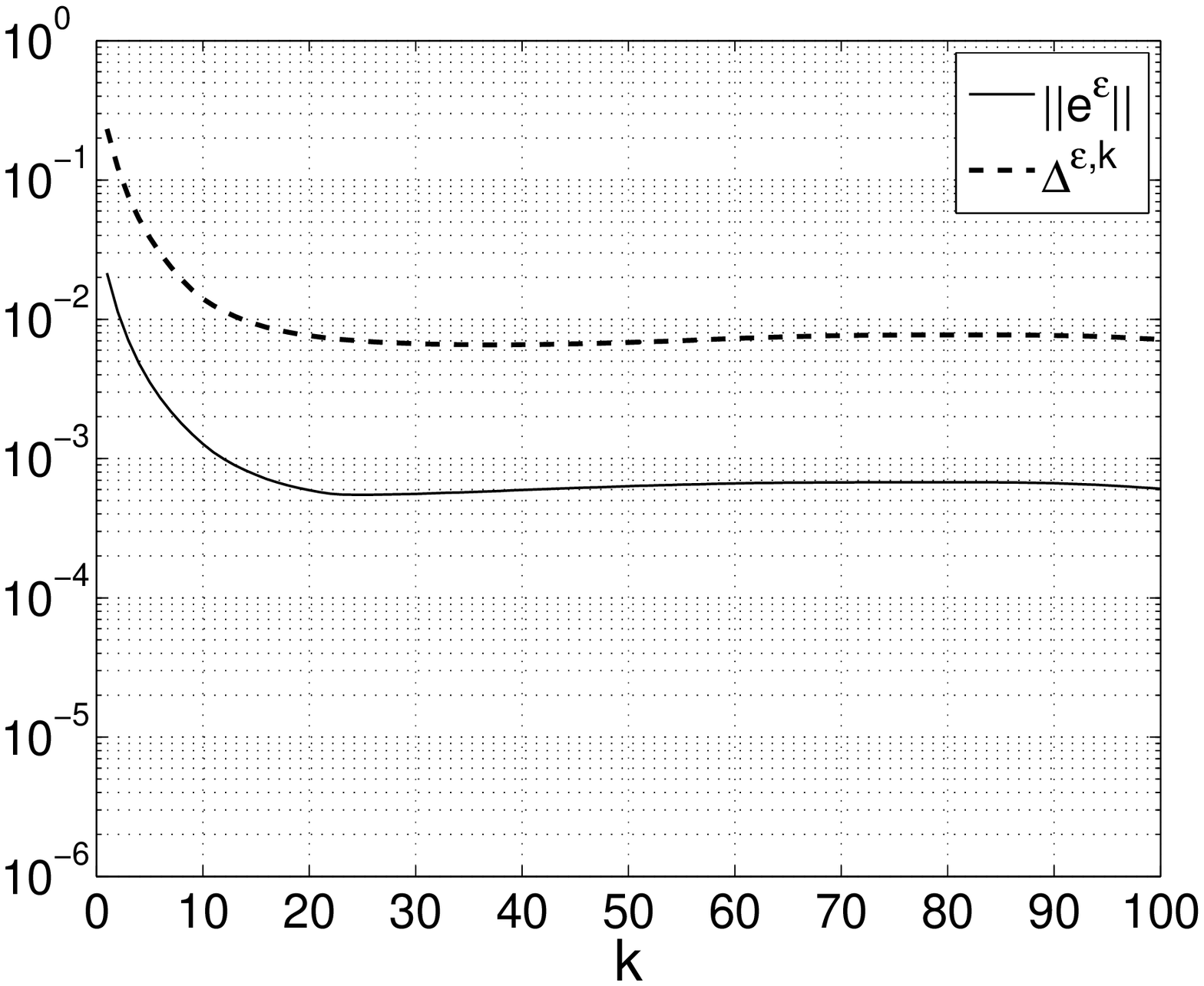,scale=0.25}
\end{minipage}\quad
\begin{minipage}[c]{0.35\textwidth}
\centerline{\footnotesize $N=28$ ($N_Z = 112$)}
\epsfig{file=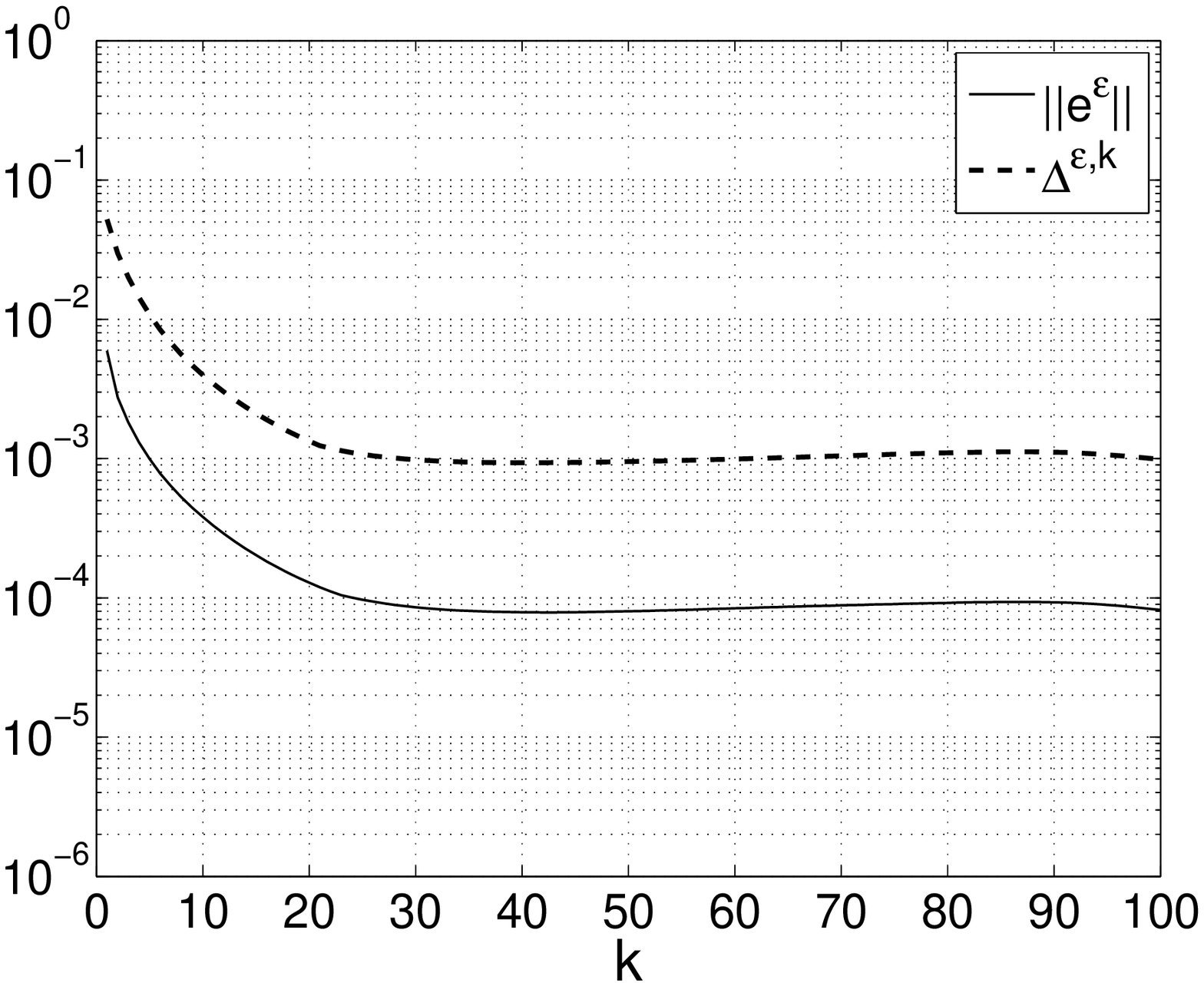,scale=0.25}
\end{minipage}}
\vspace{3ex}
\centerline{
\begin{minipage}[c]{0.15\textwidth}
\footnotesize (b)~$\varepsilon = 10^{-3}$
\end{minipage} \quad
\begin{minipage}[c]{0.35\textwidth}
\centerline{\footnotesize $N=13$ ($N_Z=70$)}
\epsfig{file=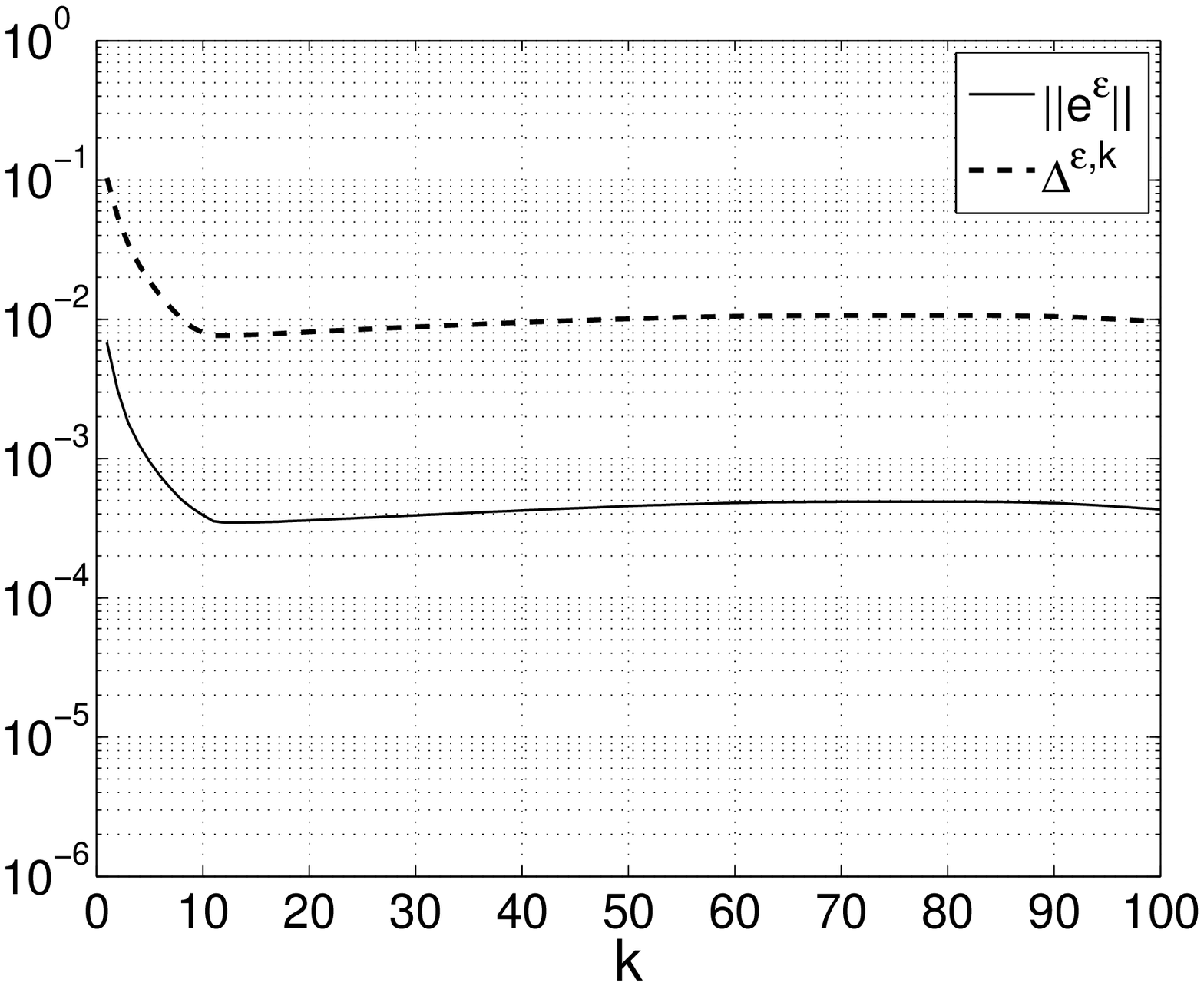,scale=0.25}
\end{minipage}\quad
\begin{minipage}[c]{0.35\textwidth}
\centerline{\footnotesize $N=20$ ($N_Z=107$)}
\epsfig{file=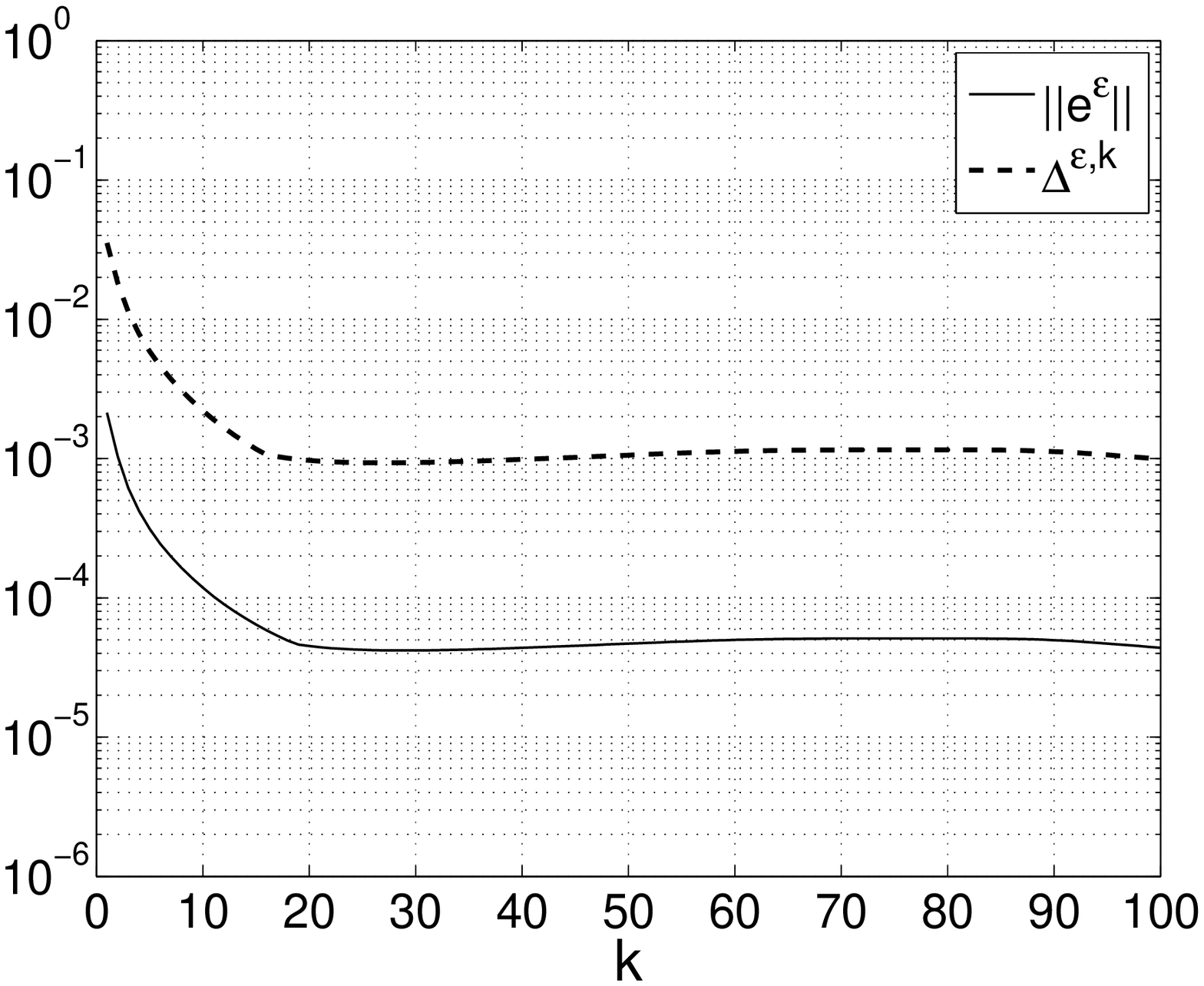,scale=0.25}
\end{minipage}}
\vspace{3ex}
\centerline{
\begin{minipage}[c]{0.15\textwidth}
\footnotesize (c)~$\varepsilon = 10^{-4}$
\end{minipage} \quad
\begin{minipage}[c]{0.35\textwidth}
\centerline{\footnotesize $N=14$ ($N_Z=79$)}
\epsfig{file=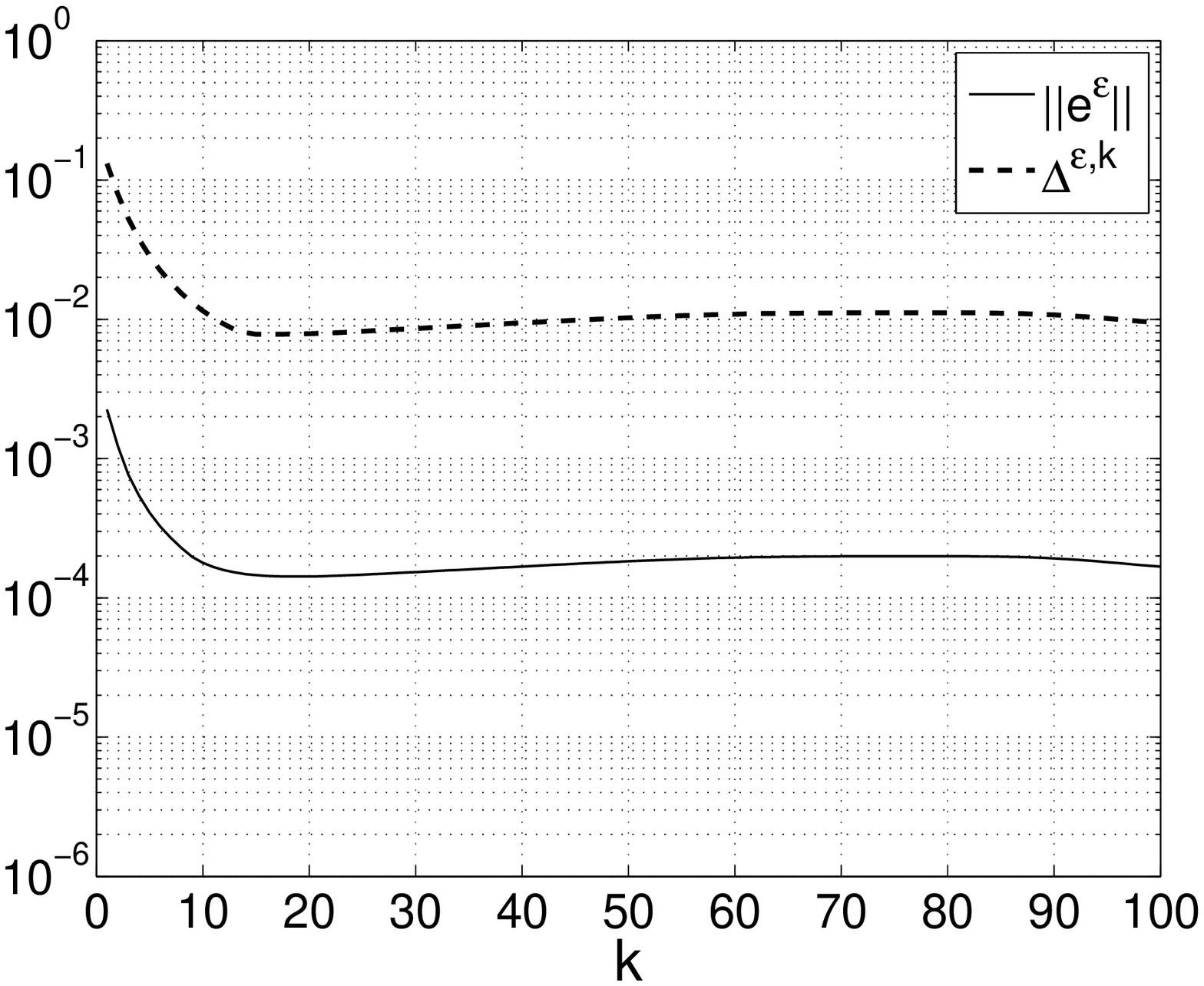,scale=0.25}
\end{minipage}\quad
\begin{minipage}[c]{0.35\textwidth}
\centerline{\footnotesize $N=25$ ($N_Z=150$)}
\epsfig{file=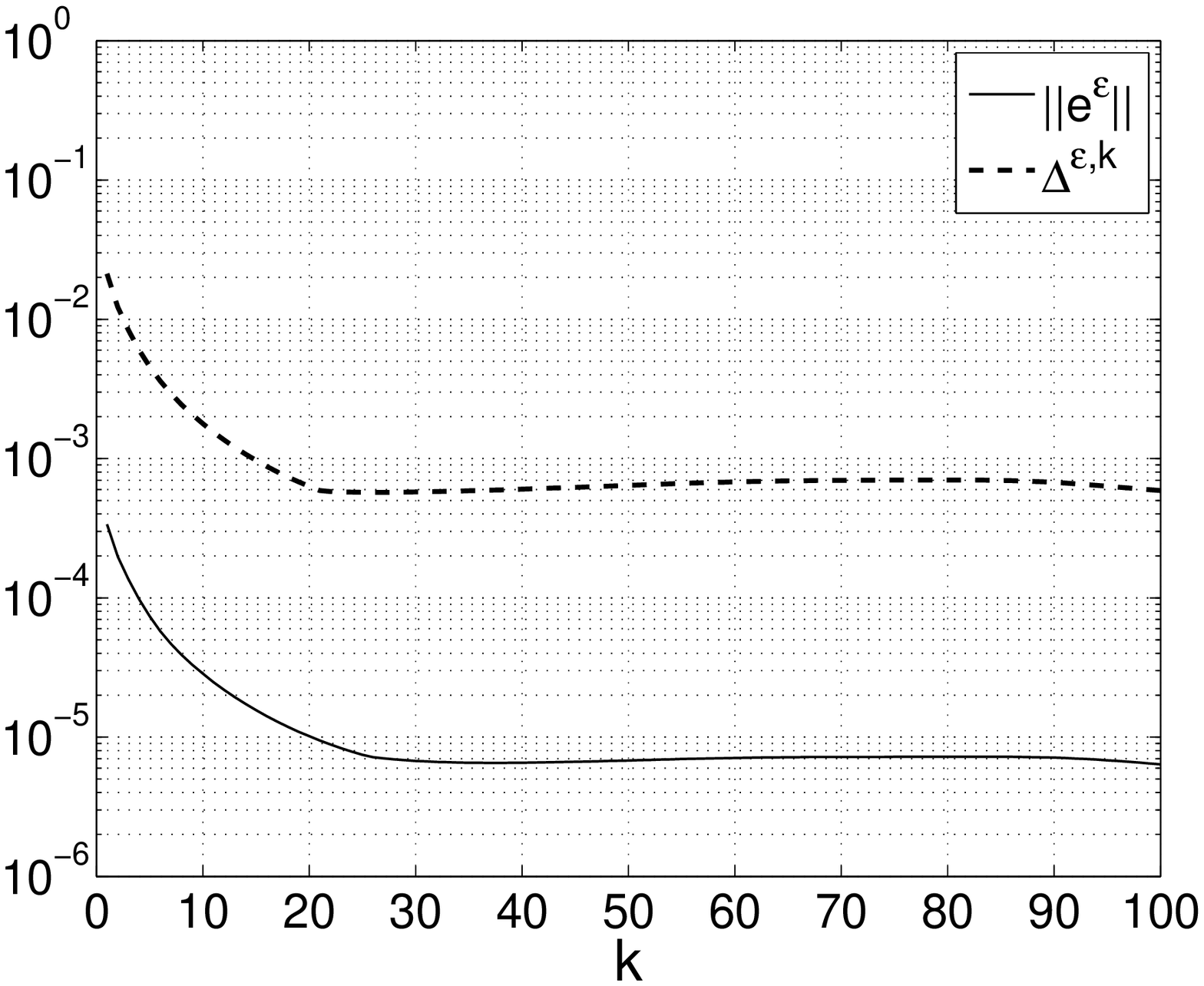,scale=0.25}
\end{minipage}}
\vspace{3ex}
\centerline{
\begin{minipage}[c]{0.15\textwidth}
\footnotesize (d)~$\varepsilon = 10^{-5}$
\end{minipage} \quad
\begin{minipage}[c]{0.35\textwidth}
\centerline{\footnotesize $N=21$ ($N_Z=121$)}
\epsfig{file=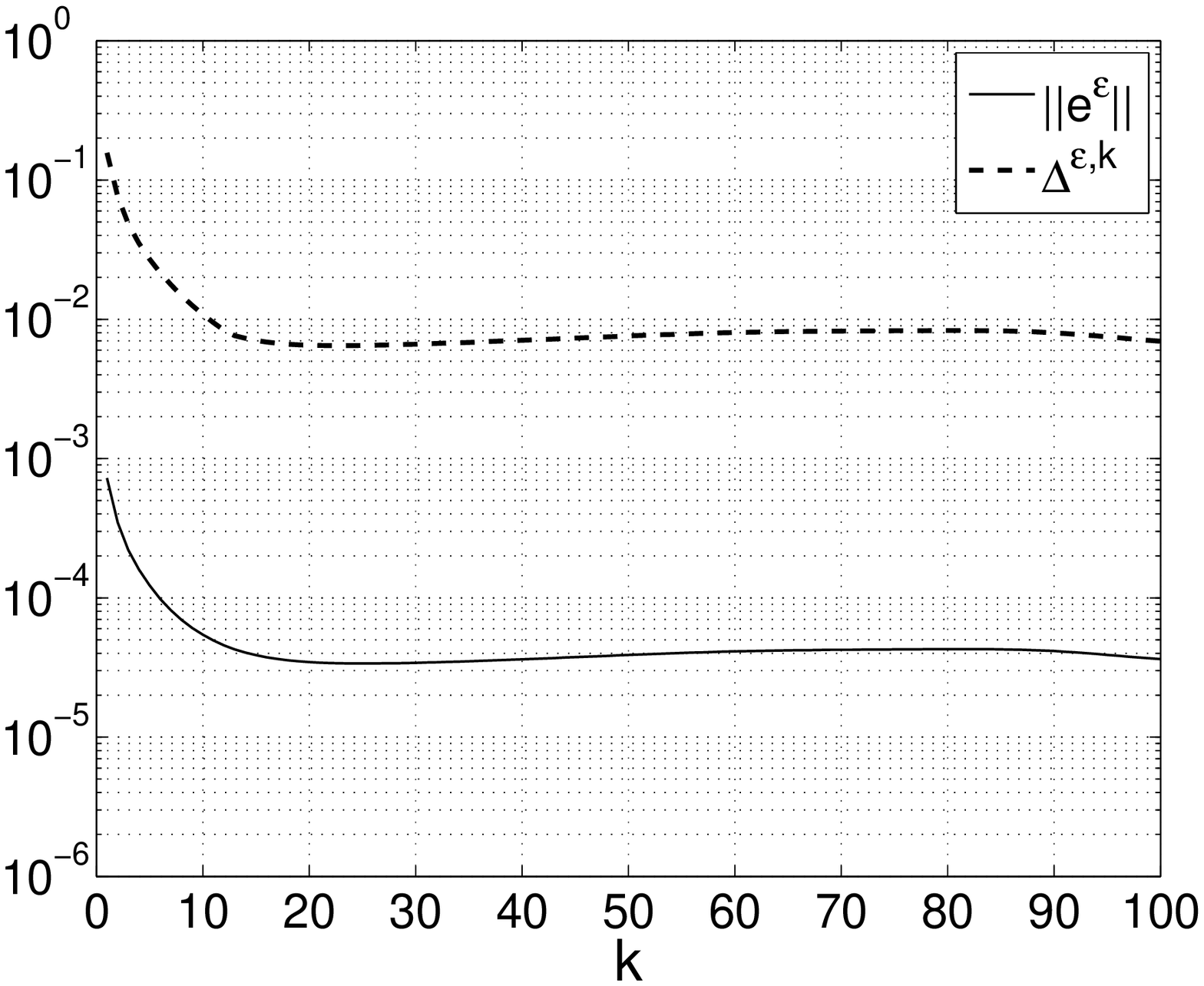,scale=0.25}
\end{minipage}\quad
\begin{minipage}[c]{0.35\textwidth}
\centerline{\footnotesize $N=31$ ($N_Z=174$)}
\epsfig{file=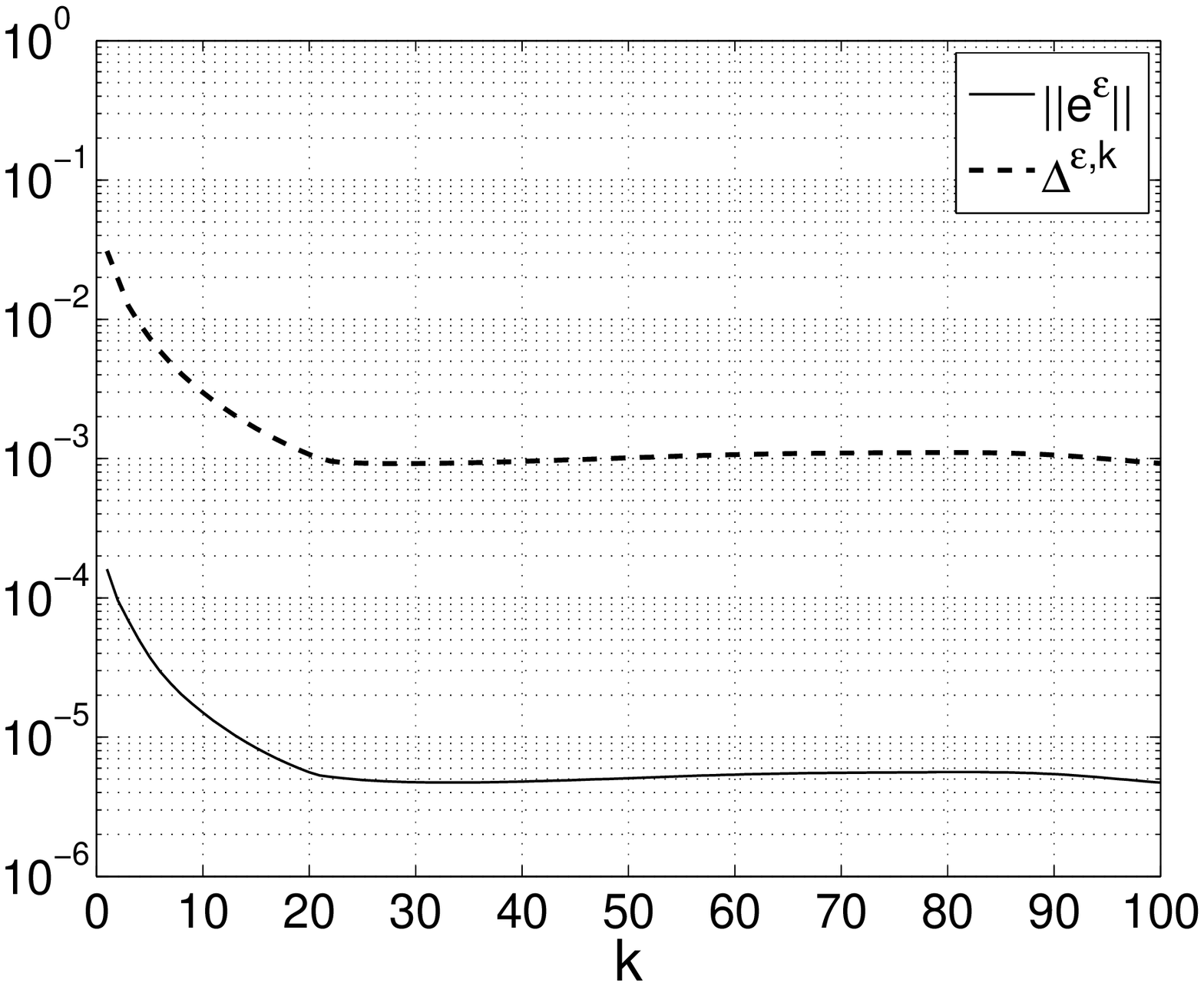,scale=0.25}
\end{minipage}}
\caption{Maximum error $\|e^{\varepsilon}_N(\mu)\|_{\ell^2(0,k;Z)}$ (see (\ref{eq:def_errors}), (\ref{eq:energy_norm_penalty})) and maximum error bound $\Delta^{\varepsilon,k}_N(\mu)$ (see (\ref{eq:energy_errbnd_penalty})) normalized with respect to $\|(u^{\varepsilon,j}(\mu),p^{\varepsilon,j}(\mu))_{j\in\bK}\|_{\ell^2(0,k;Z)}$ shown as functions of $k\in\bK$ for several values of $N$ for (a)~$\varepsilon=10^{-2}$, (b)~$\varepsilon=10^{-3}$, (c)~$\varepsilon=10^{-4}$, and (d)~$\varepsilon=10^{-5}$; the maximum is taken over 25 parameter values.}
\label{fig:err_errbnd_N_penalty}
\end{figure}

\begin{table}[htp]
\centering \footnotesize
\begin{tabular}{@{}c|c|c|c|c|c|c|c@{}}
\multicolumn{8}{l}{(a)~$\varepsilon = 10^{-2}$}\\[1ex]
\hline
$N$ & $N_Z$ & $k=10$ & $k=20$ & $k=40$ & $k=60$ & $k=80$ & $k=100$\\
\hline
& & & & & & &\\[-2.3ex]
$5$ & $20$ & $1.145\cdot 10^1$ & $1.282\cdot 10^1$ & $1.297\cdot 10^1$ & $1.296\cdot 10^1$ & $1.295\cdot 10^1$ & $1.293\cdot 10^1$\\
$15$ & $60$ & $1.202\cdot 10^1$ & $1.251\cdot 10^1$ & $1.292\cdot 10^1$ & $1.284\cdot 10^1$ & $1.281\cdot 10^1$ & $1.289\cdot 10^1$\\
$25$ & $100$ & $1.154\cdot 10^1$ & $1.154\cdot 10^1$ & $1.235\cdot 10^1$ & $1.248\cdot 10^1$ & $1.239\cdot 10^1$ & $1.227\cdot 10^1$\\
$35$ & $140$ & $1.132\cdot 10^1$ & $1.126\cdot 10^1$ & $1.163\cdot 10^1$ & $1.171\cdot 10^1$ & $1.159\cdot 10^1$ & $1.159\cdot 10^1$\\
$45$ & $180$ & $1.241\cdot 10^1$ & $1.235\cdot 10^1$ & $1.226\cdot 10^1$ & $1.206\cdot 10^1$ & $1.201\cdot 10^1$ & $1.194\cdot 10^1$\\
\hline
\end{tabular}\\[1ex]
\begin{tabular}{@{}c|c|c|c|c|c|c|c@{}}
\multicolumn{8}{l}{(b)~$\varepsilon = 10^{-3}$}\\[1ex]
\hline
$N$ & $N_Z$ & $k=10$ & $k=20$ &  $k=40$ & $k=60$ & $k=80$ & $k=100$\\
\hline
& & & & & & &\\[-2.3ex]
$4$ & $22$ & $2.691\cdot 10^1$ & $3.139\cdot 10^1$ & $3.273\cdot 10^1$ & $3.293\cdot 10^1$ & $3.275\cdot 10^1$ & $3.227\cdot 10^1$\\
$12$ & $66$ & $2.725\cdot 10^1$ & $2.969\cdot 10^1$ & $3.071\cdot 10^1$ & $3.166\cdot 10^1$ & $3.155\cdot 10^1$ & $3.101\cdot 10^1$\\
$19$ & $103$ & $2.358\cdot 10^1$ & $2.367\cdot 10^1$ & $2.593\cdot 10^1$ & $2.641\cdot 10^1$ & $2.645\cdot 10^1$ & $2.710\cdot 10^1$\\
$26$ & $145$ & $2.965\cdot 10^1$ & $2.933\cdot 10^1$ & $2.945\cdot 10^1$ & $2.973\cdot 10^1$ & $2.960\cdot 10^1$ & $2.983\cdot 10^1$\\
$34$ & $183$ & $2.983\cdot 10^1$ & $2.902\cdot 10^1$ & $2.903\cdot 10^1$ & $2.900\cdot 10^1$ & $2.850\cdot 10^1$ & $2.826\cdot 10^1$\\
\hline
\end{tabular}\\[1ex]
\begin{tabular}{@{}c|c|c|c|c|c|c|c@{}}
\multicolumn{8}{l}{(c)~$\varepsilon = 10^{-4}$}\\[1ex]
\hline
$N$ & $N_Z$ & $k=10$ & $k=20$ &  $k=40$ & $k=60$ & $k=80$ & $k=100$\\
\hline
& & & & & & &\\[-2.3ex]
$3$ & $18$ & $8.804\cdot 10^1$ & $8.882\cdot 10^1$ & $8.986\cdot 10^1$ & $9.150\cdot 10^1$ & $9.159\cdot 10^1$ & $9.113\cdot 10^1$\\
$10$ & $60$ & $6.726\cdot 10^1$ & $8.104\cdot 10^1$ & $9.668\cdot 10^1$ & $9.737\cdot 10^1$ & $9.639\cdot 10^1$ & $9.608\cdot 10^1$\\
$17$ & $101$ & $9.326\cdot 10^1$ & $1.010\cdot 10^2$ & $1.074\cdot 10^2$ & $1.055\cdot 10^2$ & $1.053\cdot 10^2$ & $1.072\cdot 10^2$\\
$24$ & $141$ & $1.058\cdot 10^2$ & $1.055\cdot 10^2$ & $1.043\cdot 10^2$ & $1.033\cdot 10^2$ & $9.825\cdot 10^1$ & $9.822\cdot 10^1$\\
$33$ & $181$ & $8.304\cdot 10^1$ & $8.290\cdot 10^1$ & $8.592\cdot 10^1$ & $8.635\cdot 10^1$ & $8.573\cdot 10^1$ & $8.706\cdot 10^1$\\
\hline
\end{tabular}\\[1ex]
\begin{tabular}{@{}c|c|c|c|c|c|c|c@{}}
\multicolumn{8}{l}{(d)~$\varepsilon = 10^{-5}$}\\[1ex]
\hline
$N$ & $N_Z$ & $k=10$ & $k=20$ &  $k=40$ & $k=60$ & $k=80$ & $k=100$\\
\hline
& & & & & & &\\[-2.3ex]
$3$ & $18$ & $2.706\cdot 10^2$ & $2.919\cdot 10^2$ & $3.311\cdot 10^2$ & $3.365\cdot 10^2$ & $3.199\cdot 10^2$ & $3.198\cdot 10^2$\\
$10$ & $55$ & $2.240\cdot 10^2$ & $2.510\cdot 10^2$ & $2.684\cdot 10^2$ & $2.699\cdot 10^2$ & $2.698\cdot 10^2$ & $2.701\cdot 10^2$\\
$17$ & $99$ & $2.696\cdot 10^2$ & $2.860\cdot 10^2$ & $3.103\cdot 10^2$ & $3.115\cdot 10^2$ & $3.107\cdot 10^2$ & $3.129\cdot 10^2$\\
$24$ & $138$ & $2.563\cdot 10^2$ & $2.950\cdot 10^2$ & $3.214\cdot 10^2$ & $3.206\cdot 10^2$ & $3.147\cdot 10^2$ & $3.210\cdot 10^2$\\
$32$ & $183$ & $2.786\cdot 10^2$ & $2.765\cdot 10^2$ & $3.206\cdot 10^2$ & $3.324\cdot 10^2$ & $3.306\cdot 10^2$ & $3.368\cdot 10^2$\\
\hline
\end{tabular}\\[1ex]
\caption{Maximum effectivities $\eta^{\varepsilon,k}_N(\mu)\equiv \Delta^{\varepsilon,k}_N(\mu)/\|e^{\varepsilon}_N(\mu)\|_{\ell^2(0,k;Z)}$ (see (\ref{eq:rig_energy_errbnd_penalty})) for several values of $k\in\bK$ and $N$ for (a)~$\varepsilon=10^{-2}$, (b)~$\varepsilon=10^{-3}$, (c)~$\varepsilon = 10^{-4}$, and (d)~$\varepsilon=10^{-5}$; the maximum is taken over 25 parameter values.}
\label{tbl:effectivities_penalty}
\end{table}

\begin{table}[htp]
\centering \footnotesize
\begin{tabular}{@{}c|c|c|c|c|c@{}}
\hline
& & & & & \\[-2.4ex]
 $\varepsilon$ & $N_Z$ &  $N$ & $(u^{\varepsilon,k}_N(\mu),p^{\varepsilon,k}_N(\mu)), k\in\bK$ & $\Delta^{\varepsilon,k}_N(\mu), k\in\bK$ & Total\\
 & & & & & \\[-2.3ex]
\hline
 & & & & & \\[-2.3ex]
$ 10^{-2}$ & 68 (112) & 17 (28) & 3.71 (7.65) &  14.53 (26.95) & 18.25 (34.60)\\
\hline
 & & & & & \\[-2.3ex]
$ 10^{-3}$ & 70 (107) & 13 (20) & 3.99 (7.43) & 17.19 (28.25) & 21.18 (35.67)\\
\hline
 & & & & & \\[-2.3ex]
$ 10^{-4}$ & 79 (150) & 14 (25) & 4.73 (13.59) & 20.28 (44.70) & 25.01 (58.29)\\
\hline
 & & & & & \\[-2.3ex]
$ 10^{-5}$ & 121 (174) & 21 (31) & 9.19 (17.47) & 33.81 (54.08) & 43.01 (71.55)\\
\hline
\end{tabular}\\[1ex]
\caption{Average computation times in milliseconds for the Online evaluation of $(u^{\varepsilon,k}_N(\mu),p^{\varepsilon,k}_N(\mu)), k\in\bK$, (assembly and solution of (\ref{eq:rb_scheme})) and the error bounds $\Delta^{\varepsilon,k}_N(\mu), k\in\bK$, (see (\ref{eq:energy_errbnd_penalty})) for different values of $\varepsilon$ with a prescribed accuracy of at least 1\% (resp., 0.1\%) for the RB approximations $(u^{\varepsilon,k}_N(\mu),p^{\varepsilon,k}_N(\mu)), k\in\bK$.}
\label{tbl:computation_times_penalty}
\end{table}

We close this section by discussing the Online computation times. For comparison, once the $\mu$-independent parts in the affine expansions of the involved operators have been formed (see \S \ref{ss:offline_online}), direct computation of the truth approximation $(u^{\varepsilon,k}(\mu),p^{\varepsilon,k}(\mu)), k\in\bK$, (i.e., assembly and solution of (\ref{eq:truth_scheme})) requires roughly 23 seconds on a 2.66 GHz Intel Core 2 Duo processor. Again, our rigorous and inexpensive RB {\em a posteriori} error bounds enable us to choose the RB dimension just large enough to obtain a desired accuracy.
Choosing $\varepsilon=10^{-2}$, the error bounds $\Delta^{\varepsilon,k}_N(\mu)$ are sharp with effectivities of approximately $12$ (see Table~\ref{tbl:effectivities_penalty}(a)) and prescribe a dimension of $N_Z= 68$ to achieve an accuracy of at least $1\%$ in the RB approximations $(u^{\varepsilon,k}_N(\mu),p^{\varepsilon,k}_N(\mu)), k\in\bK$ (see Fig.~\ref{fig:errbnd_alg6_penalty}). Once the database has been loaded, the Online calculation of $(u^{\varepsilon,k}_N(\mu),p^{\varepsilon,k}_N(\mu)), k\in\bK$, (i.e., assembly and solution of (\ref{eq:rb_scheme})) and $\Delta^{\varepsilon,k}_N(\mu), k\in\bK$, for any new value of $\mu\in\cD$ then takes on average 3.71 and 14.53 milliseconds, respectively, which is in total roughly 1,200 times faster than direct computation of the truth approximation. Choosing smaller values for $\varepsilon$, the error bounds become more pessimistic and thus dictate a larger system dimension at which they guarantee the same order of accuracy. For $\varepsilon=10^{-5}$, we need $N_Z=121$ to achieve a prescribed accuracy of at least $1\%$ in the RB approximations (see Fig.~\ref{fig:errbnd_alg6_penalty}); the Online calculation of $(u^{\varepsilon,k}_N(\mu),p^{\varepsilon,k}_N(\mu)), k\in\bK$, and $\Delta^{\varepsilon,k}_N(\mu), k\in\bK$, then takes on average 9.19 and 33.81 milliseconds, respectively, which is in total roughly 500 times faster than direct computation of the truth approximation. Thus, even for small penalty parameters $\varepsilon$, accurate approximations are guaranteed at significant Online savings.
Detailed computation times for different values of $\varepsilon$ are given in Table~\ref{tbl:computation_times_penalty}.

\section{Concluding Remarks} \label{s:conclusion}

In this paper, we present new RB methods for the instationary Stokes equations. 

Combining techniques developed in \cite{Gerner:2011fk,Gerner:2012fk} with current approaches for parabolic problems, we derive new rigorous {\em a posteriori} bounds for the errors in the RB velocity approximations and a POD greedy procedure that properly accounts for temporal and parametric causality as well as stability. The method provides rapidly convergent RB approximations that are highly efficient and whose accuracy is certified by sharp and inexpensive {\em a posteriori} error bounds.

An approximation by penalty or regularization allows for significant Offline savings at the expense of a less accurate truth approximation. Due to the introduced penalty term, an additional enrichment of the RB velocity approximation space is not always necessary to obtain stable approximations; moreover, we obtain {\em a posteriori} error bounds that do not involve the expensive computation of inf-sup stability constants. As in the stationary case (see \cite{veroy10:_stokes_penalty}), the method provides RB approximations and meaningful {\em a posteriori} error bounds that are computed very easily; nevertheless, drawbacks such as the disadvantageous dependence of the error bounds on the penalty parameter remain. 

Time integration is achieved through a backward Euler method.
Clearly, also other time integration schemes may be used. Using a Crank--Nicolson method, often preferred in practice due to its second-order accuracy, we may develop a penalty approach that is very similar to the one presented in this paper (see \cite{Gerner:2012ag}); in case of $\varepsilon=0$, useful RB {\em a posteriori} error bounds could not yet been derived and may therefore be---as well as {\em a posteriori} error bounds for the RB pressure approximations---part of future work.

\section*{Acknowledgments}
We would like to thank Prof.~Arnold Reusken and Prof. Martin Grepl of RWTH Aachen University for numerous very helpful suggestions and comments. We are also very grateful to Dr.~David J. Knezevic of Harvard University for his invaluable support on {\tt rbOOmit}~\cite{Knezevic:2011fk}, and Mark K\"{a}rcher of RWTH Aachen University for a very careful reading of the manuscript as well as many valuable comments and discussions.
Financial support from the Deutsche Forschungsgemeinschaft (German Research Foundation) through grant GSC 111 is gratefully acknowledged.
 
 \vfill
 

\bibliographystyle{siam}
\bibliography{bibtex/General,bibtex/Reduced_Basis,bibtex/Reduced_Order_Modelling,bibtex/Penalty}

\end{document}